\DeclareMathAlphabet\mathbfcal{OMS}{cmsy}{b}{n}
\numberwithin{equation}{section}
\newtheorem{theorem}{Theorem}[section]
\newtheorem{lemma}[theorem]{Lemma}
\newtheorem{corollary}[theorem]{Corollary}
\newtheorem{remark}[theorem]{Remark}
\newtheoremstyle{named}{}{}{\itshape}{}{\bfseries}{.}{.5em}{\thmnote{#3\ }#1}
\theoremstyle{named}
 \newcommand{\RR}{\mathbb{R}}
 \newcommand{\CC}{{\mathbb C}}
\newcommand{\cE}{\mathcal{E}}
\newcommand{\cC}{\mathcal{C}}
\newcommand{\cL}{\mathcal{L}}
\newcommand{\cN}{\mathcal{N}}
\newcommand{\ddc}{\sqrt{-1}\partial\bar\partial}
\newcommand{\PSH}{\mathcal{PSH}}
\newcommand{\tr}{\mathrm{tr}}
\title{Complex Alexandrov-Bakelman-Pucci estimate and its applications
}
\begin{document}

\author{Junbang Liu}
\begin{abstract}
    We prove an Alexandrov-Bakelman-Pucci type estimate, which involves the integral of the determinant of the complex Hessian over a certain subset. It improves the classical ABP estimate adapted  
    (by inequality $2^{2n}|\det(u_{i\bar{j}})|^2\geq |\det(\nabla^2u)|$) to complex setting. We give an application of it to derive sharp gradient estimates for complex Monge-Amp\`ere equations. The approach is based on the De Giorgi iteration method developed by Guo-Phong-Tong for equations of complex Monge-Amp\`ere type. 
\end{abstract}
\maketitle

\tableofcontents
\section{Introduction}
The classical Alexandrov-Bakelman-Pucci estimate (ABP-estimate in short) states that for a function $u\in C^2(\Omega)\cap C^0(\bar{\Omega})$ on a bounded domain $\Omega\subset \mathbb{R}^n$, we have \[
	\sup_{\Omega}u\leq \sup_{\partial\Omega}u+C_ndiam(\Omega)\left(\int_{\{-u=\Gamma_{-u}\}}|\det\nabla^2u|\right)^{\frac{1}{n}}.
\]
As a corollary, if the function $u$ satisfies the differential inequality $a^{ij}u_{ij}\geq f$ for some semipositive symmetric matrix-value function $a^{ij}$, then the integral on the right-hand side can be replaced by the $L^n$-norm of $\frac{f^{-}}{\mathscr{D^*}}$, where $\mathscr{D}^*=(\det(a^{ij}))^{\frac{1}{n}}$.
It's a fundamental tool in PDE theory, especially in the study of viscosity solutions for fully nonlinear equations. Both the result itself and its proof have numerical significant applications. In this paper, we prove a complex version of the ABP estimates. We find that in the complex setting, the integral of $|\det(\nabla^2 u)|$ over the contact set can be replaced by the integral of the determinant of complex Hessian $\det(u_{i\bar{j}})$ over certain subsets. Let $B_1\subset \CC^n$ be the unit ball and $u$ be a real function defined on $B_1$.
\begin{theorem}\label{refinedABP}
    Let $u\in C^2(B_1)\cap C^0(\overline{B_1})$, and let $\Phi:\RR\rightarrow \RR^+$ be a positive increasing function with $\int_0^\infty\Phi^{-\frac{1}{n}}(t)dt\leq \Lambda<\infty$. Then there exists constant $c_1,c_2$ depending only on $n,\Lambda$ such that\begin{equation}
        \sup_{B_1}u\leq \sup_{\partial B_1}u+c_1+c_2\left(\int_{\Gamma^-}\det(-u_{i\bar{j}})\Phi(\log(\det(-u_{i\bar{j}})))\right)^{\frac{1}{n}}.
    \end{equation}
    Here $\Gamma^-:=\{x\in B_1|-\ddc u(x)>0,u(x)>\sup_{\partial B_1}u\}.$ Moreover the constant $c_1$ can be chosen to be \[
    c_1=\min\{c(n),\left(\int_{\Gamma^-}\det(-u_{i\bar{j}})\right)^{\delta_n}\}, \text{ for dimensional constant } c(n),\delta_n.
    \]
\end{theorem}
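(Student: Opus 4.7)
The plan is to reduce the claimed estimate to an $L^{\infty}$ bound on a plurisubharmonic envelope and then to invoke the De Giorgi iteration of Guo-Phong-Tong for complex Monge-Amp\`ere equations.

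First, normalize $\sup_{\partial B_1}u=0$ and set $M:=\sup_{B_1}u$, which we may assume positive. Define the envelope
\begin{equation*}
\tilde w(z):=\sup\{v(z):v\in\mathrm{PSH}(B_1),\ v\leq-\max(u,0)\text{ on }B_1\}.
\end{equation*}
Its upper semicontinuous regularization is plurisubharmonic with $\tilde w\leq 0$ on $\overline{B_1}$; at a maximizing point $z_0$ of $u$ one has $\tilde w(z_0)\leq-M$, so $M\leq\|\tilde w\|_{L^\infty(B_1)}$, and it is enough to bound the right-hand side.

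Next, standard envelope theory says $(\ddc\tilde w)^n$ is supported on the contact set $\{\tilde w=-\max(u,0)\}$. At a contact point with $u>0$, the nonnegative function $-u-\tilde w$ attains its minimum, so $\ddc\tilde w\leq-\ddc u$ pointwise there; combined with $\ddc\tilde w\geq 0$, this places the active contact set inside $\overline{\Gamma^-}$ and gives
\begin{equation*}
(\ddc\tilde w)^n\leq\chi_{\Gamma^-}\det(-u_{i\bar j})\,dV
\end{equation*}
as Borel measures on $B_1$. The task is therefore reduced to the following Orlicz $L^{\infty}$ estimate: if $\tilde w$ is a non-positive PSH function on $B_1$ with $\tilde w|_{\partial B_1}=0$ and $(\ddc\tilde w)^n\leq f\,dV$ for a non-negative $f$, then $\|\tilde w\|_\infty\leq c_1+c_2(\int f\,\Phi(\log f)\,dV)^{1/n}$.

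This Orlicz $L^{\infty}$ estimate is the main analytic step, proved by adapting the Guo-Phong-Tong scheme. For $s>0$ set $\phi(s):=\int_{\{\tilde w<-s\}}(\ddc\tilde w)^n$, couple $\tilde w$ with the solution $\psi_s$ of an auxiliary complex Monge-Amp\`ere equation on $B_1$ with zero boundary data and right-hand side proportional to $\chi_{\{\tilde w<-s\}}(\ddc\tilde w)^n$, and apply the Bedford-Taylor comparison principle on $\{\tilde w<\psi_s-s\}$. The hypothesis $\int_0^\infty\Phi^{-1/n}(t)\,dt\leq\Lambda$ converts the resulting decay inequality into a terminating De Giorgi iteration and yields the desired bound. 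To refine $c_1$ to $\min(c(n),(\int_{\Gamma^-}\det(-u_{i\bar j}))^{\delta_n})$ when the total Monge-Amp\`ere mass is small, a Chebyshev-type argument controls the distribution of $\log\det(-u_{i\bar j})$ under the measure $\det(-u_{i\bar j})\,dV$ by a small power of the mass. The principal obstacle is this last step: arranging the auxiliary Monge-Amp\`ere coupling and iteration so that the Orlicz weight $\Phi$ appears with the precise power $1/n$ in the terminal bound, and interpolating cleanly between the regimes of large and small total Monge-Amp\`ere mass.
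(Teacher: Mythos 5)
You propose to reduce the theorem to an obstacle problem: bound $\sup u$ by the sup-norm of the psh envelope $\tilde w$ below $-\max(u,0)$, show $(\ddc\tilde w)^n\le\chi_{\Gamma^-}\det(-u_{i\bar j})\,dV$, and then run a Guo--Phong--Tong type iteration. This is genuinely different from the paper (which never uses envelopes), but as written it has two real gaps. First, the key measure inequality is not ``standard envelope theory.'' What is standard (Bedford--Taylor balayage) is only that $(\ddc\tilde w)^n$ vanishes on the open set $\{\tilde w<-\max(u,0)\}$. Your pointwise argument ``at a contact point $-u-\tilde w$ attains its minimum, hence $\ddc\tilde w\le-\ddc u$'' is not valid: $\tilde w$ is merely psh, has no pointwise complex Hessian, and its Monge--Amp\`ere \emph{measure} cannot be estimated point by point this way. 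The touching argument only gives the viscosity-type information $-\ddc u\ge 0$ at contact points (which is why the contact set lies in $\overline{\Gamma^-}$); the density bound $(\ddc\tilde w)^n\le\det(-u_{i\bar j})\,dV$ on the contact set is a nontrivial theorem about envelopes of $C^{1,1}$ obstacles (Berman/Guedj--Lu--Zeriahi type, normally obtained through $C^{1,1}$ regularity of the envelope plus Alexandrov a.e.\ second differentiability, or through penalized approximations), and here the obstacle $-\max(u,0)$ is only Lipschitz across $\{u=0\}$, so even quoting such a result needs a localization or smoothing step. You also need (easy, but unaddressed) that $\tilde w\to 0$ on $\partial B_1$ before any comparison with a Dirichlet solution.

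Second, you explicitly leave the heart of the matter --- making the weight $\Phi$ appear with the exact exponent $1/n$ --- as ``the principal obstacle,'' so the main analytic step is not actually carried out. Once you do have $(\ddc\tilde w)^n\le f\,dV$ with zero boundary data you could in fact conclude by Kolodziej's uniform estimate; the paper instead proves the weighted bound directly and without envelopes: it solves $(\ddc\psi_1)^n=e^{G_k}\Phi(G_k)\,dV/N_k$, reparametrizes by the concave function $h(s)=-\int_s^\infty\frac{q}{\alpha}N_k^{1/n}\Phi^{-1/n}(t)\,dt$ to obtain the decomposition $e^{G_k}\,dV\le(\ddc\psi)^n+F\,dV$ with $F\le e^{-\frac{\alpha}{q}\psi_1}$ uniformly in $L^q$ by Kolodziej's exponential integrability, and then runs the maximum-principle/De Giorgi iteration directly on $u+\psi$ (no psh-ness of $u$ is required: at the interior maximum of the test function the Hessian inequality forces $-\ddc u>0$, placing the point in $\Gamma^-$ so that $\det(-u_{i\bar j})\le e^{G_k}$ there). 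This mechanism is precisely what produces the $\Phi^{1/n}$ power and also the refined constant $c_1\le\min\{c(n),(\int_{\Gamma^-}\det(-u_{i\bar j}))^{\delta_n}\}$, which comes from bounding the iteration's starting value $\phi(s_0)$ by both $\|F\|_{L^q}$ and $\int e^{G_k}$; your ``Chebyshev-type'' suggestion for that refinement is too vague to substitute for this bookkeeping.
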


To the author’s knowledge, the application of the ABP estimate to complex geometry can be traced back to Cheng and Yau, who suggested that the $C^0$-estimate for the complex Monge-Ampère equation could be proved using the ABP estimate (see Blocki’s paper \cite{MR2817572}). Székelyhidi employed this method to establish $C^0$-estimates for a class of fully nonlinear elliptic equations on compact Hermitian manifolds, assuming the existence of a $\cC$-subsolution \cite{MR3807322}. A major breakthrough was achieved by X. Chen and J. Cheng in \cite{MR4301557, MR4301558}, where they used the ABP estimate to derive uniform estimates for solutions to cscK-type equations, assuming an entropy bound and a uniform bound on the scalar curvature. Building on the work of Chen and Cheng, Guo, Phong, and Tong developed a purely PDE-based method in \cite{MR4593734} to obtain $L^\infty$-estimates for a class of Monge-Ampère-type equations, relying only on a weak Orlicz norm $(L^1 \log^p L$-norm, $p > n$). An important intermediate step in their approach involved using a technique from Chen and Cheng to apply the ABP estimate in order to derive exponential integrability for the solution (energy estimate). For applications to parabolic equations, see also \cite{MR4311100, MR4703037, zhao2023linfty}. All of the above applications of classical ABP-estimate to complex geometry are based on the observation $2^{2n}|\det(u_{i\bar{j}})|^2\geq |\det(\nabla^2 u)|$. So, our theorem can be viewed as an improvement of the classical one in a complex setting, and it leads to sharp estimates in practice. There have also been some improvements to the ABP estimate in the complex setting (see \cite{MR4366024, MR2968239} and references therein). We remark that when the function $\Phi$ grows slowly, which does not satisfy the condition $\int_0^\infty\Phi^{-\frac{1}{n}}<\infty$, we don't expect to bound the maximum uniformly. But in this case, we still have some Trudinger-type integral estimates, \begin{corollary}
    Fix $0<p<n$, let $u\in C^2(B_1)\cap C^0(\overline{B_1})$. Assume that $u|_{\partial B_1}\leq 0$. Then there exists a uniform constant $c_1,c_2,c_3$ depending on $n,p$ such that \[
    \int_{B_1}\exp\left\{c_1\left(\frac{(u-c_2)_+}{\cN_p^\frac{1}{n}}\right)^{\frac{n}{n-p}}\right\}dV\leq c_3,
    \]where $\cN_p:=\int_{\Gamma^-}\det(-u_{i\bar{j}})(\log^2(\det(-u_{i\bar{j}}))+1)^\frac{p}{2}dV.$
\end{corollary}

One more corollary of the refined ABP estimate is an estimate of the sup-slope of a class of Hessian-type equations. Let $(M^n,\omega_M)$ be a compact Hermitian manifold with a fixed Hermitian metric $\omega_M.$ We assume that $\Gamma$ is an open convex symmetric cone in $\RR^n$ such that $\Gamma_n:=\{\lambda_i>0, i=1,\ldots,n\}\subset \Gamma\subset \Gamma_1:=\{\sum_{i=1}^n\lambda_i>0\}$. Let $f$ be a smooth nonnegative  function defined on the closure of $\Gamma$ which satisfies the following structural conditions:\begin{enumerate}
    \item $\frac{\partial f}{\partial \lambda_i}>0$ for all $i=1,\ldots,n$, and $f>0$ on in $\Gamma$ and $f=0$ on $\partial \Gamma$;
    \item $f$ is concave homogeneous of degree 1 function; 
    \item $f(\lambda)\geq n\gamma_0^{\frac{1}{n}}(\Pi_{i=1}^{n}\lambda_i)^{\frac{1}{n}},$ for some positive constant $\gamma_0$ and for all $\lambda\in \Gamma_n$.
\end{enumerate}  
The basic example is  $f(\lambda)=\sigma_k^{\frac{1}{k}}(\lambda).$
 The determinant domination condition is motivated by the important work of Guo, Phong, and Tong \cite{MR4593734} and it's proved by Harvey and Lawson \cite{MR4589710} that there are many examples of such function $f$. 
Let $\chi$ be a real $(1,1)$-form on $M$, for any real function $\varphi$, we use $\chi_\varphi$ to denote $\chi+\ddc\varphi$. Set $\lambda[\chi_\varphi]\in \RR^n$ to be the eigenvalues of $\chi_\varphi$ with respect to $\omega_M$. In local coordiantes, if $\chi=\chi_{i\bar{j}}\sqrt{-1}dz_i\wedge d\overline{z_{j}}$ and $\omega_M=g_{Mi\bar{j}}\sqrt{-1}dz_i\wedge d\overline{z_{j}}$, then $\lambda[\chi]$ is the eigenvalus of $A^{i}_{j}:=g^{i\bar{k}}\chi_{j\bar{k}}$. We assume that $\lambda[\chi]\in \Gamma$. Let $F$ be a smooth function on $M$ with the normalization $\int_Me^{nF}\omega_M^n=1$. The equation we consider is the following \begin{equation}\label{equation}\begin{aligned}&f(\lambda[\chi_\varphi])=e^{c+F},\\
&\lambda[\chi_\varphi]\in \Gamma,\sup\varphi=0.
\end{aligned}
\end{equation}

The solvability of this equation for a smooth function $F$ can be found in \cite{CX24}  which is essentially based on the $C^{2,\alpha}$-estimate established by Sz\'ekelyhidi in \cite{MR3807322}. For given $F$, the uniqueness of solution $\varphi$ and constant $c$ is an easy consequence of the maximum principle.  Moreover, the constant $c$ is characterized by Guo and Song \cite{GS24}, where the notation of sup-slopes for a class of fully nonlinear equations was introduced. Specifically, they defined the sup-slope for a smooth function $F$ with respect to equation \eqref{equation} to be \[
    \sigma_F=\inf_{\varphi\in \cE}\max_{M}e^{-F}f(\lambda[\chi_\varphi]),
\]where $\cE:=\{u\in C^\infty(M,\RR)|\lambda[\chi_\varphi]\in \Gamma\}.$ Note that for any given smooth $F$, the sup-slope is always positive. This is because, for any function $\varphi\in \cE$, at the point where $\varphi$ attains its minimum, we have $\lambda[\chi_\varphi]-\lambda[\chi]\in \Gamma_n$. So $\sigma_F\geq e^{-\max F}f(\lambda[\chi])>0$. The theorem of Guo and Song in \cite{GS24} implied that the constant $e^c$ in equation \eqref{equation} equals $\sigma_F$. It provides a min-max characterization of the constant $e^c$ appearing in solving the equation Hermitian manifolds. The refined ABP estimate can be applied to get an estimate of the sup-slope as \begin{corollary}\label{supslopeestimate}
    Let $\varphi$ be a $C^2$-solution to equation \eqref{equation} and suppose $f$ satisfies the structural conditions stated above and $\chi$ is nondegenerate, i.e. there exists a constant $c_0>0$ such that $\lambda[\chi-c_0\omega_M]\in \Gamma$. Let $\Phi:\RR\rightarrow \RR^+$ be an increasing function with $\int_0^\infty\Phi^{-\frac{1}{n}}<\infty$. Assume that $\int_M\Phi(F)e^{nF}\omega_M^n\leq K$. Then there exists a constant $\epsilon_0>0$ depending on $M,\omega_M,n, \Phi, K,c_0,\gamma_0$ such that \[
    \sigma_F=e^c>\epsilon_0.
    \]
\end{corollary}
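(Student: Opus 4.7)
The plan is to apply Theorem~\ref{refinedABP} to a test function built from $\varphi$ in a local chart, use the equation together with the determinant-domination condition~(3) to bound $\det(-u_{i\bar j})$ on the contact set by $e^{n(c+F)}$, and then use the assumed Orlicz-type integrability of $F$ to control the full ABP integral.  The desired lower bound $e^c>\epsilon_0$ will then be forced because the right-hand side of ABP must dominate a universal oscillation of the test function on some ball.

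Fix a point $p_0\in M$ and a coordinate chart around $p_0$ in which a small ball is identified, after rescaling, with $B_1\subset\CC^n$.  Let $\rho$ be the Hermitian quadratic with $\ddc\rho=\chi(p_0)$ in these coordinates, and set $u:=-\rho-\varphi$ (plus a normalizing constant).  Then
\[
-u_{i\bar j}=(\chi_\varphi)_{i\bar j}-\bigl(\chi_{i\bar j}-\chi_{i\bar j}(p_0)\bigr),
\]
and on a small enough ball the second summand is an $O(r)$ correction.  Consequently on the contact set $\Gamma^-=\{-u_{i\bar j}>0\}\cap\{u>\sup_{\partial B_1}u\}$ we may arrange $\chi_\varphi>0$, so that $\lambda[\chi_\varphi]\in\Gamma_n$, and structural condition~(3) applied to the equation yields
\[
\det(-u_{i\bar j})\le C\,\det_{\omega_M}\chi_\varphi\le\frac{C}{n^n\gamma_0}\,e^{n(c+F)}\qquad\text{on }\Gamma^-.
\]

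Now apply Theorem~\ref{refinedABP} with the weight $\tilde\Phi(t):=\Phi(t/n)$, which satisfies $\int_0^\infty\tilde\Phi^{-1/n}=n\int_0^\infty\Phi^{-1/n}<\infty$.  Inserting the Hessian bound, using monotonicity of $\Phi$ together with $c\le 0$ (assumed by contradiction), and absorbing additive shifts of $\Phi$ via a standard doubling-type estimate $\Phi(t+a)\le C_1(a)\Phi(t)+C_2(a)$ (valid for the usual classes $\Phi(t)\sim t^p$ or $e^{\alpha t}$), one obtains
\[
\sup_{B_1}u-\sup_{\partial B_1}u\ \le\ c_1+c_2'\,e^c\,(K+1)^{1/n},
\]
where moreover $c_1\le\bigl(\int_{\Gamma^-}\det(-u_{i\bar j})\bigr)^{\delta_n}\le C''(e^c)^{n\delta_n}$; both terms tend to zero as $e^c\to 0$.

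The decisive step---and, I expect, the main obstacle---is to produce a uniform lower bound $\sup_{B_1}u-\sup_{\partial B_1}u\ge\delta_0>0$ depending only on the stated data.  This is exactly a universal ``potential-well depth'' for $\rho+\varphi$ in some chart.  Here the normalization $\sup\varphi=0$ and the nondegeneracy $\lambda[\chi-c_0\omega_M]\in\Gamma$ should be combined:  by concavity and $1$-homogeneity of $f$, the latter yields $f(\lambda[\chi])\ge f(\lambda[\chi-c_0\omega_M])+c_0\,f(1,\ldots,1)\ge c_0\,f(1,\ldots,1)>0$ pointwise, and hence $f(\lambda[\chi_\varphi])\ge c_0\,f(1,\ldots,1)$ wherever $\ddc\varphi\ge 0$ (by monotonicity of $f$).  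If $\varphi$ were essentially flat near its global minimum $q_0$, the equation would force $F\ge\log(c_0\,f(1,\ldots,1))-c$ on a ball of definite size around $q_0$, which for small $e^c$ contradicts $\int_M e^{nF}\omega_M^n=1$.  Quantifying this heuristic---via a barrier argument at $q_0$ together with the measure bound coming from $\int e^{nF}\omega_M^n=1$---should give the required $\delta_0$.  Combining then yields $\delta_0\le C''(e^c)^{n\delta_n}+c_2'(K+1)^{1/n}e^c$, which forces $e^c\ge\epsilon_0$ for some $\epsilon_0>0$ depending only on $M,\omega_M,n,\Phi,K,c_0,\gamma_0$, proving the corollary.
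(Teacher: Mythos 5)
There is a genuine gap, and it is exactly the step you flag as the ``decisive step'': you never establish the uniform lower bound $\sup_{B_1}u-\sup_{\partial B_1}u\geq\delta_0>0$, and the heuristic you sketch does not obviously close it. Flatness of $\varphi$ near its minimum does not give a ball of definite size on which $\ddc\varphi\geq 0$ (that inequality is only guaranteed at the minimum point itself), so the asserted contradiction with $\int_M e^{nF}\omega_M^n=1$ is not quantified, and a ``barrier argument at $q_0$'' is left entirely unspecified. The paper avoids this issue altogether by a different choice of test function: it centers the chart at the minimum point $x_0$ of $\varphi$, picks $b_0>0$ with $c_0\omega_M\geq b_0\ddc|z|^2$ on $B(x_0,r_0)$, and applies the refined ABP estimate to
\[
u=-(\varphi-\min_M\varphi)-b_0\bigl(|z|^2-r_0^2\bigr),
\]
for which $u\leq 0$ on $\partial B(x_0,r_0)$ and $u(x_0)=b_0r_0^2$ automatically; the ``potential-well depth'' you are missing is thus obtained for free, with no contradiction argument needed.

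A secondary problem is your treatment of the contact set. With $u=-\rho-\varphi$ and frozen coefficients $\ddc\rho=\chi(p_0)$, the condition $-\ddc u>0$ gives no quantitative lower bound on $\chi(p_0)+\ddc\varphi$, so the $O(r)$ error $\chi-\chi(p_0)$ cannot simply be ``arranged'' away by shrinking the ball: $\chi_\varphi>0$ does not follow, and condition (3) cannot be applied directly to $\lambda[\chi_\varphi]$. The paper handles this point through the nondegeneracy hypothesis rather than a coefficient-freezing argument: on its contact set one has $c_0\omega_M+\ddc\varphi\geq b_0\ddc|z|^2+\ddc\varphi=-\ddc u>0$, and then concavity and $1$-homogeneity of $f$ give
\[
f(\lambda[\chi_\varphi])\geq f(\lambda[\chi-c_0\omega_M])+f(\lambda[c_0\omega_M+\ddc\varphi])\geq c(n,\gamma_0)\det\bigl(c_0\omega_{Mi\bar j}+\varphi_{i\bar j}\bigr)^{\frac{1}{n}},
\]
so that $\det(-u_{i\bar j})\leq\det(c_0\omega_{Mi\bar j}+\varphi_{i\bar j})\leq C e^{n(F+c)}$ on the contact set, after which both ABP terms scale like positive powers of $e^c$ and the lower bound on $e^c$ follows. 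Your first half (the Hessian bound via condition (3) and the use of $\tilde\Phi(t)=\Phi(t/n)$ to handle the entropy) is in the right spirit, but as written the argument is not complete without the paper's choice of base point and quadratic correction.
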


 As another application of the refined ABP estimate in the complex setting, we derive a gradient estimate for the complex Monge-Ampère equation. In Yau’s seminal work on solving Calabi’s conjecture \cite{https://doi.org/10.1002/cpa.3160310304}, the gradient estimate was bypassed using $C^0$ and $C^2$-estimates. A direct proof of the gradient bound was later provided by Blocki \cite{MR2495772}, assuming pointwise upper bounds on $F$ and $|\nabla F|$. In the work of Chen and He \cite{MR2993005}, they used the Moser iteration method to establish estimates up to $C^2$, assuming a $W^{1,p}$-bound on $F$ for some $p > 2n$. More recently, Chen and Cheng \cite{MR4028264} showed the gradient estimate by imposing conditions on the modulus of continuity of $F$.
In the recent work of Guo, Phong, and Tong \cite{MR4693954}, they derived the gradient estimate using the ABP estimate, assuming an upper bound on $\int_M |\nabla F|^{2n} e^{2F} \omega^n$ and a pointwise upper bound on $F$. Moreover, based on a deep theorem on uniform estimates of Green’s functions by Guo, Phong, and Sturm \cite{guo2022greens}, they reduced the pointwise bound on $F$ to an $L^p$-bound on $e^F$ for some $p > 1$ and an integral bound on $\int_M |\nabla F|^{q} e^F \omega^n$ for some $q > n$. In their paper, they also gave counterexamples when $q<n$. In this paper, we address the critical case where $q = n$ and prove the following:
\begin{theorem}Given $q>n$, $p>1$. Let $\varphi$ be a smooth solution to equation \eqref{CMAeq} with the conditions stated in section 3, we have the following estimate on the gradient of $\varphi$\[
	|\nabla\varphi|_{\omega}^2\leq C_1e^{C_2(\varphi-\inf_M\varphi)},
\]where $C_1, C_2$ are positive constants depending on a lower bound of the bisectional curvature of $\omega$ and $\omega,n,||e^F||_{L^q(\omega^n)},\int_M|\nabla F|_\omega^n\log^p(|\nabla F|_{\omega}+1)e^F\omega^n.$
\end{theorem}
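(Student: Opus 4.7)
The plan is to combine the Guo--Phong--Tong ABP-based gradient-estimate strategy from \cite{MR4693954} with the refined ABP estimate (Theorem~\ref{refinedABP}) proved above. The additional flexibility of freely choosing the weight function $\Phi$ is precisely what allows one to reach the critical exponent $q=n$, at the cost of the logarithmic correction $\log^p(|\nabla F|_\omega+1)$ appearing in the hypothesis.

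The first step is to set up a standard test quantity of the form
\[
H := \log|\nabla \varphi|_\omega^2 - A\varphi,
\]
where $A$ is large depending on a lower bound for the bisectional curvature of $\omega$. A direct computation of the linearized Monge--Amp\`ere operator $\cL := g^{i\bar j}_\varphi \partial_i\partial_{\bar j}$ applied to $H$, combined with the equation $\det(g_{i\bar j}+\varphi_{i\bar j})=e^{nF}\det(g_{i\bar j})$, yields a pointwise inequality of the form
\[
\cL H \;\geq\; \tfrac12 A\,\tr_{\omega_\varphi}\omega - C|\nabla F|_\omega - C
\]
on the set $\{|\nabla\varphi|_\omega\geq 1\}$, once bisectional curvature and cross-terms are absorbed via the choice of $A$.

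Next I localize. Fix a near-maximum point $p_0$ of $H$, work in a coordinate ball $B_1\subset\CC^n$ through a normal chart at $p_0$, and subtract a quadratic barrier so that the resulting function has the right boundary behaviour. Apply Theorem~\ref{refinedABP} to $u := -(H - H(p_0))$ with weight $\Phi(t) = (1+t_+)^p$, which satisfies $\int_0^\infty \Phi^{-1/n}\,dt<\infty$ since $p>n$. On the contact set $\Gamma^-$, the pointwise inequality above and the equation together convert the complex Hessian determinant into a control of the form
\[
\det(-u_{i\bar j})\,\Phi\bigl(\log\det(-u_{i\bar j})\bigr) \;\leq\; C\bigl(|\nabla F|_\omega+1\bigr)^{n}\log^{p}\!\bigl(|\nabla F|_\omega+1\bigr)\,e^{nF}.
\]
The right-hand side is globally integrable on $M$: the $L^q$-bound on $e^F$ with $q>n$ and the Orlicz-type bound on $\int_M|\nabla F|_\omega^n\log^p(|\nabla F|_\omega+1)e^F\,\omega^n$ combine through H\"older's inequality to give the required integral control.

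Finally, I close the argument by a De Giorgi iteration in the style of \cite{MR4593734, MR4693954}. For $s\geq s_0$ set
\[
\phi(s) := \int_{\{H>s\}}(H-s)\,\omega^n,
\]
and apply the localized refined ABP estimate on each superlevel set together with Chebyshev's inequality to obtain a superlinear recursion of the type
\[
\phi(s+\delta) \;\leq\; C\delta^{-\alpha}\phi(s)^{1+\beta}
\]
for some $\alpha,\beta>0$ depending on $n,p,q$. A standard iteration lemma then forces $\phi(s)=0$ for $s$ beyond a computable threshold, which upon unwinding the definition of $H$ yields the stated exponential estimate for $|\nabla\varphi|_\omega^2$. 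The main obstacle is the sharp bookkeeping at the critical exponent $q=n$: to extract a strictly positive $\beta$ (and hence convergence of the iteration) one must precisely match the exponent $p$ of the logarithmic weight in the hypothesis with the weight $\Phi$ chosen in Theorem~\ref{refinedABP}, which is exactly the gain that the refined ABP provides over the classical one and is unavailable in the proof of \cite{MR4693954}.
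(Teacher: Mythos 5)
Your proposal follows the right general outline (localize, apply Theorem~\ref{refinedABP} with a polynomial weight $\Phi$, use the linearized operator to convert $\det(-u_{i\bar j})$ on the contact set into $|\nabla F|_\omega^n$ times the volume density), but it has a genuine gap at its first step, and that gap hides essentially all of the work the paper's proof actually does. The pointwise inequality $\cL H\geq \tfrac12 A\,\tr_{\omega_\varphi}\omega-C|\nabla F|_\omega-C$ for $H=\log|\nabla\varphi|_\omega^2-A\varphi$ is not justified away from a critical point of $H$: writing $\Delta_\varphi\log|\nabla\varphi|^2=\frac{\Delta_\varphi|\nabla\varphi|^2}{|\nabla\varphi|^2}-\frac{|\nabla|\nabla\varphi|^2|^2_{\omega_\varphi}}{|\nabla\varphi|^4}$, the negative term is only bounded via Cauchy--Schwarz by \emph{twice} the positive third-order terms contained in $\Delta_\varphi|\nabla\varphi|^2$, so an uncontrolled term of size $-\frac{|\nabla\nabla\varphi|^2+|\nabla\bar\nabla\varphi|^2}{|\nabla\varphi|^2}$ survives; Blocki's device for removing it uses $\nabla H=0$ at the maximum point, which is unavailable on the ABP contact set $\Gamma^-$ (a region, not a point). (There is also a sign slip: applying ABP to $u=H(p_0)-H\geq 0$ bounds nothing.) The paper instead takes $H=e^{-\lambda\varphi}|\nabla\varphi|^2_\omega$, for which the pointwise lemma quoted from \cite{MR4693954} does hold, and works with $\eta H^2$ so that the $2|\nabla H|^2_{\omega_\varphi}$ term absorbs the cutoff cross terms; the price is that the resulting right-hand side is $-4\eta e^{-\frac{\lambda}{2}\varphi}H^{3/2}|\nabla F|_\omega-2\lambda(n+2)\eta H^2$, so the entropy in the ABP inequality unavoidably carries powers of the unknown maximum $M$. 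The heart of the paper's proof --- absent from your proposal --- is checking that after the $\tfrac1n$-th root all these powers are strictly below $2$, using $\int_M H\,\omega^n\leq C$ (Guo--Phong--Tong), the Young-type Lemma~\ref{younginequality}, and H\"older against $\|e^F\|_{L^q}$, so that $M^2\leq(1-\theta)M^2+C(1+M^{\gamma})$ with $\gamma<2$ closes the argument.

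Even granting your differential inequality, two further steps do not hold as stated. On $\Gamma^-$ one gets $\det(-u_{i\bar j})\leq C(|\nabla F|_\omega+1)^n e^F$ (with the normalization \eqref{CMAeq} the density is $e^F$, not $e^{nF}$), so $\Phi(\log\det(-u_{i\bar j}))$ produces an $|F|^p$ contribution in addition to $\log^p(|\nabla F|_\omega+1)$; the cross term $\int(|\nabla F|_\omega+1)^n|F|^p e^F$ is not among your hypotheses, and controlling it is exactly what Lemma~\ref{younginequality} combined with the $L^q$-bound on $e^F$ (and the exponent constraints between the logarithmic power and $q$) is for. Your H\"older claim is also not available: an $L^1$ bound on $|\nabla F|_\omega^n\log^p(|\nabla F|_\omega+1)e^F$ together with $e^F\in L^q$ does not control $\int|\nabla F|_\omega^n\log^p(|\nabla F|_\omega+1)e^{nF}$. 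Finally, the closing recursion $\phi(s+\delta)\leq C\delta^{-\alpha}\phi(s)^{1+\beta}$ is asserted without any mechanism producing the superlinear exponent $1+\beta$: your integral control is a fixed global quantity, not one that improves on superlevel sets. In the paper no outer De Giorgi iteration is needed at this stage --- the iteration is already internal to Theorem~\ref{refinedABP}, which is applied once to $\eta H^2$, after which the proof closes with the algebraic inequality in $M$ described above.
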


The paper is organized as follows. In section 2, we prove the refined ABP estimates and give some direct corollaries. In section 3 we prove an application to gradient estimates of Monge-Ampere equations. In section 3 we prove a parabolic version of ABP estimates.

\textbf{Acknowledgements}. The author would like to thank his advisor, Professor Xiuxiong Chen, for his guidance and encouragement. He also thanks Jingrui Cheng for reading the draft and providing helpful comments and suggestions. The author was partially supported by Simons Foundation International, LTD.

\section{Complex version of ABP-estimate}
In this section, we prove an ABP-type estimate in the complex setting. Let $B_1\subset \mathbb{C}^n$ be the unit ball. We use $dV$ to denote the standard Euclidean volume form. 

\begin{proof}[Proof of theorem \ref{refinedABP}.]
Without loss of generality, we can assume that $\sup_{\partial B_1}u=0$.
    Let $e^{G_k}$ be a sequence of positive smooth functions approximating $\chi_{\Gamma^-}\det(-u_{i\bar{j}})$ from above uniformly, where $\chi_{\Gamma^-}$ is the characteristic function with respect to set $\Gamma^-$. Then it's easy to see that \begin{equation}\label{convergenceN}
        N_k:=\int_{B_1}e^{G_k}\Phi({G_k})dV\rightarrow \int_{\Gamma^-}\det(-u_{i\bar{j}})\Phi(\log(\det(-u_{i\bar{j}})))dV.
    \end{equation}
    Consider the following auxiliary function \begin{equation}\label{psi1construction}
(\ddc\psi_{1})^n=\frac{e^{G_k}\Phi(G_k)dV}{N_k}, \ddc\psi_{1}\geq 0, \psi_{1}|_{\partial B_1}=0.
    \end{equation}
    
For any given $q>1$,  which will be chosen later, define the function $h(s): \mathbb{R}\rightarrow \mathbb{R}$ as \[
    h(s)=-\int_s^\infty\frac{q}{\alpha}\frac{N_k^{\frac{1}{n}}}{\Phi^{\frac{1}{n}}(t)}dt,
\] where $\alpha>0$ is the constant in lemma \ref{alphainvariant}.
Since we assume that $\Phi$ is an increasing function, we know $h$ is a concave increasing function. 

Now define the function $\psi$ by $\psi=-h(-\frac{\alpha}{q}\psi_1)$. Then \[
	\ddc\psi=\frac{\alpha}{q}h'(-\frac{\alpha}{q}\psi_1)\ddc\psi_1-\frac{\alpha^2}{q^2}h''(-\frac{\alpha}{q})\sqrt{-1}\partial\psi_1\wedge\bar{\partial}\psi_1\geq \frac{\alpha}{q}h'(-\frac{\alpha}{q}\psi_1)\ddc\psi_1.
\]
Taking $n$-th power we have \[
	(\ddc\psi)^n\geq \frac{\alpha^n}{q^{n}}(h'(-\frac{\alpha}{q}\psi_1))^n(\ddc\psi_1)^n=\frac{\alpha^n}{q^n}h^{'n}\frac{e^{G_k}\Phi(G_k)dV}{N_k},
\] which is equivalent to \[
	e^{G_k}dV\leq \left(\frac{\alpha^n}{q^n}h^{'n}\frac{\Phi(G_k)}{N_k}\right)^{-1}(\ddc\psi)^n
\]
There are two possible cases, $\left(\frac{\alpha^n}{q^n}h^{'n}\frac{\Phi(G_k)}{N_k}\right)\geq 1$ or $\left(\frac{\alpha^n}{q^n}h^{'n}\frac{\Phi(G_k)}{N_k}\right)\leq 1$. 
In the first case, we have $e^{G_k}dV\leq (\ddc\psi)^n$, and in the second case, we have \[
	\Phi(G_k)\leq \frac{q^n}{\alpha^n}\frac{1}{h'^n(-\alpha q^{-1}\psi_1)}N_k. 
\]
According to our definition of $h$, we have $\Phi(s)=\frac{q^n}{\alpha^n}\frac{1}{h'^n(s)}N_k. $ Since $\Phi$ is an increasing function, in the second case, we have \[
G_k\leq -\frac{\alpha}{q}\psi_1.
\]
In summary, we have \begin{equation}\label{comparison}e^{G_k}dV\leq (\ddc\psi)^n+FdV.\end{equation} with $F=\min(\exp\{-\frac{\alpha}{q}\psi_1\},e^{G_k}).$

Next, we consider the auxiliary complex Monge-Ampere equation \[
	(\ddc\psi_{s,j})^n=\frac{\eta_j(u+\psi-s)FdV}{A_{s,j}}, \quad \psi_{s,j}|_{\partial B_1}=0, \text{ and } \ddc\psi_{s,j}>0,
\]
where $\eta_j$ is a sequence of smooth positive functions approximating $\max\{x,0\}$ from above uniformly and $A_{s,j}$ is the normalization constant defined by \[
	A_{s,j}:=\int_{B_1}\eta_{j}(u+\psi-s)FdV.
\] Then \[
\lim_{j\rightarrow \infty}A_{s,j}=A_s:=\int_{\Omega_s}(u+\psi-s)FdV.
\]
Consider the function \[
	\Psi:=u+\psi-s-\epsilon(-\psi_{s,j})^{-\frac{n}{n+1}}, 
\]where $\epsilon=\left(\frac{n+1}{n}\right)^{\frac{n}{n+1}}A_{s,j}^{\frac{1}{n+1}}.$

We claim that $\Psi\leq 0 $ for $s\geq s_0:=-h(0)<q\alpha^{-1}N_k^{\frac{1}{n}}\Lambda$. Firstly, on the boundary, we have $u\leq 0, \psi=-h(0)$ and $\psi_{s,j}=0$, so $\Psi\leq 0$ on the boundary when $s\geq -h(0)$. Without loss of generality, we can assume that the maximum of $\Psi$ is attained at $x_0$ for some $x_0$ in $\Omega_s:=\{u+\psi-s>0\}\subset \{u>0\}$. Then, at $x_0$, we have \[
	\begin{aligned}
	0\geq &\ddc\Psi=\ddc u+\ddc\psi+\epsilon\frac{n}{n+1}(-\psi_{s,j})^{-\frac{1}{n+1}}\ddc\psi_{s,j}\\
	&+\epsilon\frac{n}{(n+1)^n}(-\psi_{s,j})^{-\frac{n+2}{n+1}}\sqrt{-1}\partial\psi_{s,j}\wedge\bar{\partial}\psi_{s,j}\\
	\geq& \ddc u+\ddc\psi+\epsilon\frac{n}{n+1}(-\psi_{s,j})^{-\frac{1}{n+1}}\ddc\psi_{s,j}.
	\end{aligned}
\]
Therefore, we have $-\ddc u>0$ at this point and \[
\begin{aligned}
(-\ddc u)^n\geq& (\ddc\psi)^n +\left(\epsilon\frac{n}{n+1}\right)^n(-\psi_{s,j})^{-\frac{n}{n+1}}(\ddc\psi_2)^n
\end{aligned}
\]
Combine with equation \eqref{comparison} we have \[\begin{aligned}
   (\ddc\psi)^n+FdV\geq e^{G_k}\geq &(-\ddc u)^n\\
   \geq& (\ddc\psi)^n+\left(\epsilon\frac{n}{n+1}\right)^{n}(-\psi_{s,j})^{-\frac{n}{n+1}}(\ddc\psi_{s,j})^n. 
\end{aligned}
\]Plug in the definition of $\psi_{s,j}$ we get \[
	\eta_j(u+\psi-s)\leq A_{s,j}\left(\epsilon\frac{n}{n+1}\right)^{-n}(-\psi_{s,j})^{\frac{n}{n+1}}.
\]
From the definition of $\epsilon, $ and the fact $\eta_j\geq x_+$, the above inequality implies the claim. 

Now, we begin the iteration process. To begin with, we need a lemma from Kolodziej. 
\begin{lemma}[Kolodziej\cite{MR1618325} lemma 2.5.1]\label{alphainvariant}Let $\Omega\subset \mathbb{C}^n$ be a bounded pseudoconvex domain. There exists a constant $\alpha>0$ and $C>0$ such that for any $\psi$, a plurisubharmonic function on $\Omega$ with $\psi|_{\partial D}=0$ and unit Monge-Ampere measure $\int_{\Omega}(\ddc\psi)^n\leq1$, we have \[
	\int_{\Omega}e^{-\alpha\psi}\leq C.
\]
\end{lemma}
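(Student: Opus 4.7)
The plan is to deduce the exponential integrability from Bedford--Taylor pluripotential capacity theory combined with a quantitative comparison between the complex Monge--Amp\`ere capacity and Lebesgue measure (an Alexander--Taylor type inequality). Recall that the relative Monge--Amp\`ere capacity of a Borel set $E\subset\Omega$ is
\[
\mathrm{cap}(E,\Omega)=\sup\Bigl\{\int_E(\ddc v)^n : v\in\PSH(\Omega),\ -1\le v\le 0\Bigr\}.
\]
I would first bound the capacity of each sublevel set $\{\psi<-t\}$, then translate that capacity bound into a Lebesgue measure bound, and finally conclude with a layer-cake integration.

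The first step is essentially immediate. Because $\psi\le 0$ with $\psi|_{\partial\Omega}=0$, the function $v_t:=\max(\psi/t,-1)$ is admissible in the definition of $\mathrm{cap}$, it equals $-1$ on $\{\psi<-t\}$, and $(\ddc v_t)^n=t^{-n}(\ddc\psi)^n$ there. The Bedford--Taylor comparison principle then gives
\[
\mathrm{cap}(\{\psi<-t\},\Omega)\;\le\;\frac{1}{t^n}\int_{\{\psi<-t\}}(\ddc\psi)^n\;\le\;\frac{1}{t^n}.
\]

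The second step uses the Alexander--Taylor inequality on bounded pseudoconvex domains: there exist constants $A,B>0$ depending only on $\Omega$ such that
\[
|E|\;\le\;A\exp\bigl(-B\,\mathrm{cap}(E,\Omega)^{-1/n}\bigr)\qquad\text{for every Borel } E\subset\Omega.
\]
Plugging in the capacity estimate above yields $|\{\psi<-t\}|\le A e^{-Bt}$ for all $t>0$, so that for any $\alpha<B$ the layer-cake formula
\[
\int_\Omega e^{-\alpha\psi}\,dV\;=\;|\Omega|+\alpha\int_0^\infty e^{\alpha t}\,|\{\psi<-t\}|\,dt
\]
converges, producing the universal constants $\alpha$ and $C$ claimed in the lemma.

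The main obstacle is the Alexander--Taylor inequality itself, which is the deep input; steps one and three are essentially bookkeeping. The standard argument goes through the relative extremal function $u_E^*$, the upper semicontinuous regularization of $\sup\{v\in\PSH(\Omega):v\le 0,\ v|_E\le -1\}$. One establishes (i) $u_E^*$ is maximal on $\Omega\setminus\overline{E}$ with $\int_\Omega(\ddc u_E^*)^n=\mathrm{cap}(E,\Omega)$, (ii) $u_E^*\equiv -1$ quasi-everywhere on $E$, and (iii) a quantitative pointwise lower bound of the shape $u_E^*(x)\ge -c\bigl(\log(A/|E|)\bigr)^{-1}$ obtained by comparing against an explicit psh competitor built from a logarithmic/Newtonian potential of $\chi_E$. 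Matching the two bounds for $u_E^*$ on the boundary of $E$ yields the inequality. Making the constants in (iii) depend only on $\Omega$ and not on $E$ is the technical crux, and is carried out in Kolodziej's original monograph by a careful regularization argument on hyperconvex domains.
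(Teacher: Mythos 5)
The paper offers no proof of this lemma: it is quoted verbatim from Kolodziej \cite{MR1618325}, so the only comparison to make is with the standard pluripotential argument, and your reconstruction is exactly that argument — the comparison-principle estimate $\mathrm{cap}(\{\psi<-t\},\Omega)\le t^{-n}$, the Alexander--Taylor volume--capacity inequality $|E|\le A\exp\bigl(-B\,\mathrm{cap}(E,\Omega)^{-1/n}\bigr)$, and a layer-cake integration, which together do yield the exponential integrability with $\alpha<B$. Two small corrections to your write-up. First, the competitor $v_t=\max(\psi/t,-1)$ only gives a \emph{lower} bound for the capacity; the upper bound you actually need is obtained by applying the comparison principle to $\psi$ and $tv$ for an \emph{arbitrary} admissible $v$ with $-1\le v\le 0$, using $\{\psi<-t\}\subset\{\psi<tv\}$ and $\liminf_{z\to\partial\Omega}(\psi-tv)\ge 0$, so the sentence about $v_t$ should be replaced by that argument. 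Second, the sketched pointwise bound $u_E^*(x)\ge -c\bigl(\log(A/|E|)\bigr)^{-1}$ cannot hold on $E$ itself, where $u_E^*=-1$ quasi-everywhere; the genuine proof of Alexander--Taylor goes through the Siciak--Zaharjuta extremal function and Chebyshev-type polynomial estimates rather than a potential of $\chi_E$. Since Alexander--Taylor is a classical theorem you may simply quote, neither point affects the validity of your overall derivation.
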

Let $\phi(s):=\int_{\Omega_s}FdV$. Then on $\Omega_{s+t}$, we have $(u+\psi-s)_+\geq t$. It follows that \begin{equation}\label{tphisplust}
	t\phi(s+t)\leq \int_{\Omega_{s}}(u+\psi-s)FdV=A_s.
\end{equation}
On the other hand, for $s\geq s_0$,\[
	\begin{aligned}
	A_s=&\int_{\Omega_s}(u+\psi-s)FdV\\
	\leq &c_nA_{s,j}^{\frac{1}{n+1}}\int_{\Omega_s}(-\psi_{s,j})^{\frac{n}{n+1}}FdV\\
	\leq &c_nA_{s,j}^{\frac{1}{n+1}}\left(\int_{\Omega_s}(-\psi_{s,j})^{\frac{np}{n+1}}FdV\right)^{\frac{1}{p}}\left(\int_{\Omega_s}FdV\right)^{1-\frac{1}{p}}.
	\end{aligned}
\]
Let $j\rightarrow \infty$, we have \[
	A_s\leq c(n,p)\left(\int_{\Omega_s}(-\psi_{s,j})^{\frac{np}{n+1}}FdV\right)^{\frac{n+1}{np}}\left(\int_{\Omega_s}FdV\right)^{(1-\frac{1}{p})\frac{n+1}{n}}
\]
We fix $p>n+1$, then $\delta:=(1-\frac{1}{p})\frac{n+1}{n}-1>0.$ Note that, with $\frac{1}{q^*}+\frac{1}{q}=1,$\[
	\int_{\Omega_s}(-\psi_{s,j})^{\frac{np}{n+1}}FdV\leq\left(\int_{B_1}(-\psi_{s,j})^{\frac{npq^*}{n+1}}\right)^{1/q^*}||F||_{L^q}\leq C(n).
\] In the above inequality we used lemma \ref{alphainvariant} and $F\leq e^{-\frac{\alpha}{q}\psi_1}$.

So we get 
\[
	A_s\leq c(n)\phi(s)^{1+\delta}.
\]
Combined with \eqref{tphisplust}, we get \[
	t\phi(s+t)\leq c(n)\phi(s)^{1+\delta}, \quad \forall s\geq s_0,\text{ and } t>0
\]
Then by the De Giorgi iteration lemma, we get \[
	\phi(s)=0, \quad \text{ for all } s\geq s_\infty:=\frac{2c(n)\phi(s_0)^\delta}{1-2^{-\delta}}+s_0.
\]
Choose $c_1=\frac{2c(n)\phi(s_0)^\delta}{1-2^{-\delta}}\leq c(n)(\int_{\Omega_0}FdV)^{\delta}\leq c(n)||e^{-\frac{\alpha}{q}\psi_1}||_{L^q}\leq c(n).$
Note also that $F\leq e^{G_k}$, we have 
$c_1\leq c(n)(\int_{\Omega_0} e^{G_k}dV)^{\delta}$, let $k\rightarrow \infty$,  we get \[c_1\leq c(n)\left(\int_{\Gamma^-}\det(-u_{i\bar{j}})dV\right)^{\delta}.\]

We remark the constant $\delta$ can be choose close but strictly less than $\frac{1}{n}.$

Let $k\rightarrow \infty$, and by  \eqref{convergenceN}, we see $s_0$ is bounded by $$c(n,\alpha,\Lambda)\left( \int_{\Gamma^-}\det(-u_{i\bar{j}})\Phi(\log(\det(-u_{i\bar{j}})))dV \right)^{\frac{1}{n}}.$$
Therefore \[
u+\psi\leq c_1+c_2\left( \int_{\Gamma^-}\det(-u_{i\bar{j}})\Phi(\log(\det(-u_{i\bar{j}})))dV \right)^{\frac{1}{n}},
\] with $c_1.c_2$ described in the theorem.
Note that from the construction of auxiliary function, $\psi$ is nonnegative, so the proof is complete. 

\begin{lemma}[De Giorgi]\label{DeGiorgiiterationlemma}Let $\phi:\RR\rightarrow \RR$ be a monotone decreasing function such for some $\delta>0$ and any $s\geq s_0,t>0,$\[
   t\phi(t+s)\leq C_0\phi(s)^{1+\delta}. 
\]
Then $\phi(s)=0$ for any $s\geq \frac{2C_0\phi^{\delta}(s_0)}{1-2^{-\delta}}+s_0$.
\end{lemma}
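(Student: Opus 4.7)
The plan is to carry out a standard dyadic De Giorgi iteration. If $\phi(s_0)=0$ then monotonicity of $\phi$ immediately yields $\phi\equiv 0$ on $[s_0,\infty)$, so we may assume $\phi(s_0)>0$. I would define an increasing sequence $\{s_k\}_{k\ge 0}$ starting at the given $s_0$ via $s_{k+1}=s_k+t_k$, with increments $t_k>0$ to be chosen, and prove by induction on $k$ that
\[
\phi(s_k)\le 2^{-k}\phi(s_0).
\]

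For the inductive step, the hypothesis applied at the pair $(s_k,t_k)$ combined with the induction assumption gives
\[
t_k\,\phi(s_{k+1})\le C_0\,\phi(s_k)^{1+\delta}\le C_0\,2^{-k(1+\delta)}\phi(s_0)^{1+\delta}.
\]
To close the induction, i.e.\ to obtain $\phi(s_{k+1})\le 2^{-(k+1)}\phi(s_0)$, it suffices to require
\[
t_k\cdot 2^{-(k+1)}\phi(s_0)\ge C_0\,2^{-k(1+\delta)}\phi(s_0)^{1+\delta},
\]
which is satisfied by the canonical dyadic choice $t_k:=2C_0\,2^{-k\delta}\phi(s_0)^{\delta}$.

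With these increments, the limit level is $s_\infty:=\lim_k s_k = s_0+\sum_{k\ge 0}t_k = s_0+\dfrac{2C_0\phi(s_0)^{\delta}}{1-2^{-\delta}}$, which matches the threshold in the statement; the series converges precisely because $\delta>0$, which is exactly where the gain in the nonlinear exponent $1+\delta$ is used. Since $\phi(s_k)\to 0$ and $\phi$ is monotone decreasing, we conclude $\phi(s)=0$ for every $s\ge s_\infty$. There is no real obstacle here: the argument is self-contained once the geometric choice of $t_k$ is made, and the threshold in the statement is tuned so that the telescoping sum of increments is exactly summable.
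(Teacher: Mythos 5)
Your proof is correct and is exactly the standard dyadic De Giorgi iteration; the paper itself states this lemma without proof (attributing it to De Giorgi), so there is nothing to diverge from. The only point worth making explicit is that you implicitly use $\phi\geq 0$: it is needed both to pass from $\phi(s_k)\leq 2^{-k}\phi(s_0)$ to $\phi(s_k)^{1+\delta}\leq 2^{-k(1+\delta)}\phi(s_0)^{1+\delta}$ and to upgrade $\phi(s)\leq 0$ to $\phi(s)=0$ at the end; this is harmless since nonnegativity is implicit in the statement (and holds in the paper's application, where $\phi(s)=\int_{\Omega_s}F\,dV$).
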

\end{proof}
An immediate corollary is the following uniform estimate for the Dirichlet problem of complex Monge-Ampere equations, established by Kolodziej, \begin{corollary}Let $u$ be a smooth solution to the Dirichlet problem \[
(\ddc u)^n=e^fdV, \quad \ddc u>0 \text{ in } B_1, \quad u|_{\partial B_1}=0.
\] Let $\Phi:\RR\rightarrow \RR^+$ be an increasing function such that $\int_0^\infty\Phi^{-\frac{1}{n}}\leq \Lambda<\infty$. Then  there exists a constant $C$ depending on $ n, \Lambda, \int_{B_1}e^f\Phi(f)dV$ such that \[
-C\leq u\leq 0.
\]
    
\end{corollary}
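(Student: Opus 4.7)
The plan is to split into the two inequalities. The upper bound $u\leq 0$ is immediate from the maximum principle: the hypothesis $\ddc u>0$ makes $u$ strictly plurisubharmonic, and $u|_{\partial B_1}=0$ then forces $u\leq 0$ throughout $B_1$. So the real work is the lower bound, and the natural move is to apply the refined ABP estimate (Theorem \ref{refinedABP}) to $v:=-u$.

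To carry this out, note that $v\in C^2(B_1)\cap C^0(\overline{B_1})$, $v|_{\partial B_1}=0$, and $-\ddc v=\ddc u>0$ on all of $B_1$, so the contact set appearing in Theorem \ref{refinedABP} becomes $\Gamma^-_v=\{u<0\}\subseteq B_1$. The equation $(\ddc u)^n=e^f\,dV$ gives $\det(-v_{i\bar j})=\det(u_{i\bar j})=c_n\, e^f$ for a dimensional constant $c_n$, so the integrand on the right-hand side of the refined ABP bound rewrites as $c_n\, e^f\,\tilde\Phi(f)$, where $\tilde\Phi(t):=\Phi(t+\log c_n)$ is still increasing and still satisfies $\int_0^\infty\tilde\Phi^{-1/n}\leq\Lambda'<\infty$ with $\Lambda'$ controlled by $\Lambda$ and $n$. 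Applying Theorem \ref{refinedABP} to $v$ then yields
\[
-\inf_{B_1}u=\sup_{B_1}v\leq c_1+c_2\Bigl(\int_{B_1}e^f\Phi(f)\,dV\Bigr)^{1/n},
\]
where $c_1$ is bounded by a purely dimensional constant thanks to the moreover part of Theorem \ref{refinedABP}. This is exactly the claimed lower bound $u\geq -C$.

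I do not expect a genuine obstacle in this argument: once the refined ABP estimate is in hand, this corollary is essentially a direct specialization with the roles of $u$ and $-u$ interchanged. The only bookkeeping is tracking the dimensional normalization constant $c_n$ relating $(\ddc u)^n$ to $\det(u_{i\bar j})\,dV$ and verifying that the harmless shift in the argument of $\Phi$ preserves the integrability hypothesis. In particular, no De Giorgi iteration or further auxiliary Monge--Amp\`ere equation is needed at this stage, since all of that work has already been absorbed into the statement of Theorem \ref{refinedABP}.
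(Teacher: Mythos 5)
Your argument is correct and is essentially the intended one: the paper states this corollary without proof as an immediate specialization of Theorem \ref{refinedABP}, applied exactly as you do to $v=-u$ (upper bound by the maximum principle, lower bound from the ABP inequality with $\Gamma^-_v=\{u<0\}$ and $c_1\leq c(n)$). One bookkeeping simplification: since $(\ddc u)^n=e^f\,dV$ gives $\det(u_{i\bar j})=(n!\,2^n)^{-1}e^f\leq e^f$ and $\Phi$ is increasing, you can apply Theorem \ref{refinedABP} with the original $\Phi$ and bound the entropy term by $\int_{B_1}e^f\Phi(f)\,dV$ directly, avoiding the shifted $\tilde\Phi$ --- note the shift there is to the \emph{left} (the dimensional constant is $<1$), so $\int_0^\infty\tilde\Phi^{-1/n}$ is not obviously controlled by $\Lambda$ and $n$ alone as you claimed.
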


\begin{corollary}
    Let $a^{i\bar{j}}(x)$ be a smooth Hermitian matrix-valued function on $B_1\subset\CC^n$ which is semipositive definite everywhere. Assume $\Phi:\RR\rightarrow \RR^+$ is an positive increasing function with $\int_0^\infty\Phi^{-\frac{1}{n}}(t)dt\leq \Lambda<\infty$. Let $u\in C^{2}(B_1)\cap\ C^{0}(\overline{B_1})$ be a real function satisfying $a^{i\bar{j}}u_{i\bar{j}}\geq f$ for a smooth function $f$, then there exists a constant $c_1,c_2,\delta>0$ depending on $n,\Lambda$ such that \[
    \sup_{B_1}u\leq \sup_{\partial B_{1}}u+c_1+c_2\left(\int_{\Gamma_-}\frac{(f^-)^n}{\det(a^{i\bar{j}})}\Phi(\log(\frac{(f^-)^n}{n^n\det(a^{i\bar{j}})}))dV\right)^{\frac{1}{n}},
    \]
    where $\Gamma^-:=\{x\in B_1|u(x)>\sup_{\partial B_1}u, \ddc u(x)< 0\}.$
    Moreover the constant $c_1$ can be chosen to be \[
    c_1=\min\{c(n),\left(\int_{\Gamma^-}\det(-u_{i\bar{j}})\right)^{\delta}\}, \text{ for a dimensional constant } c(n).
    \]
\end{corollary}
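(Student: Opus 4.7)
The plan is to apply Theorem \ref{refinedABP} directly once I have a pointwise control of $\det(-u_{i\bar j})$ by $(f^-)^n/\det(a^{i\bar j})$ on the set $\Gamma^-$. First I would unpack what the assumptions give us on $\Gamma^-$: there we have $-u_{i\bar j}>0$ (positive definite), while $a^{i\bar j}$ is positive semi-definite by hypothesis. Since $a^{i\bar j}(-u_{i\bar j}) = \tr\bigl(A(-H)\bigr)\ge 0$ for the matrix product of two such matrices, the elliptic inequality $a^{i\bar j}u_{i\bar j}\ge f$ forces $f\le 0$ on $\Gamma^-$, so $f^-=-f$ there, and moreover
\[
f^-\ge a^{i\bar j}(-u_{i\bar j}).
\]
I would treat the (measure-zero or otherwise degenerate) locus where $\det(a^{i\bar j})=0$ separately; there the integrand on the right-hand side is $+\infty$, making the inequality trivial, so there is no loss in assuming $\det(a^{i\bar j})>0$ almost everywhere on $\Gamma^-$.

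Next, the key pointwise inequality comes from the Hermitian AM--GM inequality applied to the product of two positive semi-definite matrices $A=(a^{i\bar j})$ and $B=(-u_{i\bar j})$: the eigenvalues of $AB$ coincide with those of the positive semi-definite matrix $A^{1/2}BA^{1/2}$, so
\[
\tr(AB)\ge n\bigl(\det(AB)\bigr)^{1/n}=n\bigl(\det(a^{i\bar j})\det(-u_{i\bar j})\bigr)^{1/n}.
\]
Combining this with the previous display yields
\[
\det(-u_{i\bar j})\le \frac{(f^-)^n}{n^n\det(a^{i\bar j})}\qquad\text{on }\Gamma^-.
\]

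Now I would plug this into the refined ABP estimate. The function $t\mapsto t\,\Phi(\log t)$ is monotone increasing on $(0,\infty)$, since both factors are, so the pointwise bound above gives
\[
\det(-u_{i\bar j})\,\Phi\bigl(\log\det(-u_{i\bar j})\bigr)\le \frac{(f^-)^n}{n^n\det(a^{i\bar j})}\,\Phi\!\left(\log\frac{(f^-)^n}{n^n\det(a^{i\bar j})}\right)
\]
on $\Gamma^-$. Integrating, taking the $n$-th root, and absorbing the factor $n^{-1}$ into the constant $c_2$ converts the conclusion of Theorem \ref{refinedABP} into exactly the estimate claimed. The statement on $c_1$ follows verbatim from Theorem \ref{refinedABP}: the alternative form $c_1\le c(n)\bigl(\int_{\Gamma^-}\det(-u_{i\bar j})\bigr)^{\delta}$ is already part of that theorem, and I would simply quote it.

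There is no real obstacle here beyond book-keeping; the one point worth flagging is the monotonicity of $t\,\Phi(\log t)$ (otherwise one could not substitute the pointwise bound inside $\Phi$), and the harmless singularity where $a^{i\bar j}$ degenerates, which I would dispose of as above.
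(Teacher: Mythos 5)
Your proposal is correct and follows essentially the same route as the paper, whose one-line proof is exactly the chain $f^-\geq a^{i\bar{j}}(-u)_{i\bar{j}}\geq n\det(a^{i\bar{j}})^{\frac{1}{n}}\det(-u_{i\bar{j}})^{\frac{1}{n}}$ on the contact set followed by an application of Theorem \ref{refinedABP}. You merely make explicit the details the paper leaves implicit (the sign of $f$ on $\Gamma^-$, the matrix AM--GM step, the monotonicity of $t\mapsto t\,\Phi(\log t)$, and the degenerate locus of $\det(a^{i\bar{j}})$), all of which are handled correctly.
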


\begin{proof}
    Apply the above ABP estimate with the inequality, at point $\ddc u\leq 0$, \[f^-\geq a^{i\bar{j}}(-u)_{i\bar{j}}\geq n\det(a^{i\bar{j}})^\frac{1}{n}\det(-u_{i\bar{j}})^{\frac{1}{n}}\].
\end{proof}

Another corollary (of the proof) is that when $\Phi$ grows slowly, which may not satisfy $\int^\infty\Phi^{-\frac{1}{n}}<\infty$, we still have integral estimates. We state a simple case as follows: 
\begin{corollary}
    Fix $0<p<n$, let $u\in C^2(B_1)\cap C^0(\overline{B_1})$. Assume that $u|_{\partial B_1}\leq 0$. Then there exists a uniform constant $c_1,c_2,c_3$ depending on $n,p$ such that \[
    \int_{B_1}\exp\left\{c_1\left(\frac{(u-c_2)_+}{\cN_p^\frac{1}{n}}\right)^{\frac{n}{n-p}}\right\}dV\leq c_3,
    \]where $\cN_p:=\int_{\Gamma^-}\det(-u_{i\bar{j}})(\log^2(\det(-u_{i\bar{j}}))+1)^\frac{p}{2}dV.$
\end{corollary}
\begin{proof}
    Choose $\Phi(x)=x^p$ when $x>2$, and define $h(s)=\int_0^s\frac{q}{\alpha}\frac{\cN_p^\frac{1}{n}}{\Phi(x)^{\frac{1}{n}}}dx$. Then a similar argument shows that $u+h(-\frac{\alpha}{q}\psi)\leq C$ for some uniform constant $C$ and $\psi_1$ as \eqref{psi1construction} . Note that $h(s)\leq C_1s^{1-\frac{p}{n}}+C_2$. It follows that $(u-C_3)_+\leq C_4\cN_p^{\frac{1}{n}}(-\psi_1)^{\frac{n-p}{n}}.$ Apply lemma \ref{alphainvariant}, we get the desired estimate. 
\end{proof}

Now we give the proof of corollary \ref{supslopeestimate}.
\begin{proof}[Proof of Corollary \ref{supslopeestimate}]
    Let $x_0$ be the point where $\varphi$ attains its minimum and choose a coordinates chart $B(x_0,r_0)$ such that on this coordinates chart we have $c_0\omega_M\geq b_0\ddc|z|^2$. Then one can apply the refined ABP estimate to function \[
    u=-(\varphi-\min\varphi)-b_0(|z|^2-r_0^2).
    \]
    Note that $u\leq 0$ on $\partial B(x_0,r_0)$ and $u(x_0)=b_0r_0^2$. Therefore\[
\begin{aligned}
     b_0r_0^2\leq& c_1\left(\int_{\Gamma^-}\det(\varphi_{i\bar{j}}+b_0\delta_{ij})\right)^{\delta_n}+c_2\left(\int_{\Gamma^-}\det(\varphi_{i\bar{j}}+\delta_{ij})\Phi(\log(\det(\varphi_{i\bar{j}}+\delta_{ij})))\right)^{\frac{1}{n}}\\
     \leq &c_1\left(\int_{\Gamma^-}\det(c_0\omega_{Mi\bar{j}}+\varphi_{i\bar{j}})\right)^{\delta_n}+c_2\left(\int_{\Gamma^-}\det(\varphi_{i\bar{j}}+c_0\omega_{Mi\bar{j}})\Phi(\log(\det(\varphi_{i\bar{j}}+c_0\omega_{Mi\bar{j}})))\right)^{\frac{1}{n}}\\
     \leq &C\left(\int_{\Gamma^-}f(\lambda[\chi_\varphi])^n\right)^{\delta_n}+C\left(\int_{\Gamma^-}f(\lambda[\chi_\varphi])^n\Phi(n\log(f(\lambda[\chi_\varphi])))\right)^{\frac{1}{n}}\\
     =&C\left(\int_{\Gamma^-}e^{nF+nc}\right)^{\delta_n}+C\left(\int_{\Gamma^-}e^{nF+nc}\Phi(nF+nc)\right)^{\frac{1}{n}}
\end{aligned}
\]In the third line we used the assumptions on $f$ as \[
f(\lambda[\chi+\ddc\varphi])\geq f(\lambda[\chi-c_0\omega_M])+f(\lambda[c_0\omega_M+\ddc\varphi])\geq c(n,\gamma_0)\det(c_0\omega_{Mi\bar{j}}+\varphi_{i\bar{j}})^{\frac{1}{n}}
\]
The above inequality implies a lower bound of the constant $c$ so the proof is completed. 
\end{proof}

\begin{remark}\begin{enumerate}
\item The proof works for bounded pseudoconvex domain $\Omega\subset \CC^n$.
\item If we assume that $\sup_{\partial B_1}u\leq 0$, and with the same assumption as above, then the inequality holds \[
	\sup_{B_1} u\leq C_1\left(\int_{\{u\geq 0\}}\frac{(f^-)^n}{n^n\mathscr{D}}dV\right)^{\delta}+C_2\left(\int_{\{u\geq 0\}}\frac{(f^-)^n}{n^n\mathscr{D}}\Phi(\log(\frac{(f^-)^n}{n^n\mathscr{D}}))dV\right)^{\frac{1}{n}}
\]

\item One can apply the above refined ABP-estimates to get sharp $L^\infty$-estimate of complex Monge-Ampere type equations on compact Hermitian manifold. We sketch a proof here for complex Monge-Ampere equations and $\Phi(x)=|x|^n\log^p(|x|+1),p>n$ for $x>0$, for simplicity. Let $(M,\omega)$ be a compact Hermitian manifold with $\omega$ a fixed Hermitian metric. Consider the equation $(\omega+\ddc\varphi)^n=e^F\omega^n,\omega+\ddc\varphi>0,\sup\varphi=-1.$  Assume that $\varphi$ attains its minimum at $x_0$. We choose a coordinate chats $z$ around $x_0$, say $B(x_0,r_0)$ such that $\frac{1}{2}\ddc|z|^2\leq \omega\leq 2\ddc|z|^2$. Define a function $u=-\varphi+\inf\varphi-\frac{1}{2}(|z|^2-r^2_0)$. Then $u\leq 0$ on $\partial{B_{r_0}}$ and $u(x_0)=\frac{1}{2}r_0^2$. Moreover, $u$ satisfies the differential inequality \[
	\Delta_\varphi u=-n+\tr_\varphi \omega-\frac{1}{2}\tr_\varphi\ddc|z|^2\geq -n.
\]Therefore by our refined ABP-estimate, for $p-2\epsilon>n$,  \begin{equation}\label{uniformlowerbound}
	\frac{1}{2}r_0^2=u(x_0)\leq \int_{\{u>0\}}e^FdV+c_2\left(\int_{\{u>0\}}e^F|F|^n(\log(|F|+1))^{p-2\epsilon}dV\right)^{\frac{1}{n}}.
\end{equation}
Note that on $\{u>0\}$, we have $-\inf\varphi-\frac{1}{2}r_0^2\leq -\varphi-\frac{1}{2}|z|^2$. It follows that \[\begin{aligned}
\int_{\{u>0\}}e^FdV\leq &\int_{B(x_0,r_0)}\frac{\frac{1}{2^n}\log^n(-\varphi-\frac{1}{2}|z|^2)}{\frac{1}{2^n}\log^n(-\inf\varphi-\frac{1}{2}r_0^2)}e^FdV\\
\leq &\frac{1}{\frac{1}{2^n}\log^n(-\inf\varphi-\frac{1}{2}r_0^2)}\left(\int_{B(x_0,r_0)}(-\varphi-\frac{1}{2}|z|^2)dV+\int_{B(x_0,r_0)}e^F(|F|+1)^ndV\right)\\
\leq &C\frac{1}{\frac{1}{2^n}\log^n(-\inf\varphi-\frac{1}{2}r_0^2)}.
\end{aligned}
\]
We have used the elementary inequality $xe^y\leq e^y\log^n(|y|+1)+C_ne^{2x^{\frac{1}{n}}}$ in the second line for $x=\frac{1}{2^n}\log^n(-\varphi-\frac{1}{2}|z|^2),y=F$. Similarly, using $xe^y\leq e^y\log^{\epsilon}(\log(|y|+1)+1)+ x\exp\exp\exp x^{\frac{1}{\epsilon}}\leq e^y\log^{\epsilon}(\log(|y|+1)+1)+ C_\epsilon\exp\exp\exp 2x^{\frac{1}{\epsilon}}$, for $x=\frac{1}{2^\epsilon}\log(\log(\log(-\varphi-\frac{1}{2}|z|^2)))$(one can renormalize $\varphi$ such that $x>0$) and $y=\log(e^F|F|^n\log^{p-2\epsilon}(|F|+1)),$ we can estimate the second term as \[\begin{aligned}
	&\int_{\{u>0\}}e^F|F|^n(\log(|F|+1))^{p-2\epsilon}dV \\
	\leq &\frac{\int_{B(x_0,r_0)}(-\varphi-\frac{1}{2}|z|^2)dV+\int_{B(x_0,r_0)}e^F|F|^n\log^{p}(|F|+1))dV}{\frac{1}{2^\epsilon}\log(\log(\log(-\inf\varphi-\frac{1}{2}r_0^2)))}
\end{aligned}
\]
Together with (\ref{uniformlowerbound}), we get an uniform upper bound for $-\inf\varphi$. 
\end{enumerate}
\end{remark}
\section{Application to gradient estimate}
In this section, Let $(M,\omega)$ be a compact K\"ahler manifold with a given K\"ahler metric $\omega$. Let $F$ be a smooth function on $M$ with normalization $\int_Me^F\omega^n=\int_M\omega^n$. We consider the following complex Monge-Ampere equation on $M$:\begin{equation}\label{CMAeq}
	(\omega+\ddc\varphi)^n=e^F\omega^n.
\end{equation}
\begin{theorem}Given $p>n$, $q>1$. Let $\varphi$ be a smooth solution to equation \eqref{CMAeq} with the above conditions, we have the following estimate on the gradient of $\varphi$\[
	|\nabla\varphi|_{\omega}^2\leq C_1e^{C_2(\varphi-\inf_M\varphi)},
\]where $C_1,C_2$ are positive constants depending on a lower bound of the bisectional curvature of $\omega$ and $\omega,n,||e^F||_{L^q(\omega^n)},\int_M|\nabla F|_\omega^n\log^p(|\nabla F|_{\omega}+1)e^F.$
\end{theorem}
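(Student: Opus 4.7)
The plan is to apply the refined ABP estimate of Theorem \ref{refinedABP} to an auxiliary function built from $|\nabla\varphi|_\omega^2$ and $\varphi$, localized to a coordinate ball around the maximum point of that function. The outline follows the general strategy of Guo-Phong-Tong \cite{MR4693954}, but the critical case we are after (integrability of order exactly $n$ with a logarithmic weight $\log^p$, $p>n$) forces us to invoke the stronger ABP bound of Theorem \ref{refinedABP} rather than its classical predecessor.

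First, I set $u = \log(1+|\nabla\varphi|_\omega^2) - A(\varphi-\inf_M\varphi)$, where $A$ is a large constant to be chosen in terms of a lower bound of the bisectional curvature of $\omega$. A Bochner-type computation in the metric $\omega_\varphi := \omega+\ddc\varphi$, combined with the differentiated Monge-Amp\`ere identity $g_\varphi^{i\bar j}\varphi_{i\bar j k} = F_k$ to handle the third-derivative terms, should yield a pointwise inequality
\[
\Delta_\varphi u \ge -C_0 - C_0\,|\nabla F|_\omega,
\]
once $A$ is fixed large enough to absorb the bisectional curvature contributions together with the $\tr_\varphi\omega$ terms produced by $-A\Delta_\varphi\varphi$.

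Next I would work in a coordinate chart $B(x_0,r_0)$ around the point $x_0$ where $u$ attains its maximum, chosen so that $\omega$ and $\ddc|z|^2$ are uniformly comparable on the ball, subtracting a small quadratic bump to guarantee nonpositivity on $\partial B(x_0,r_0)$. Then I apply the corollary of Theorem \ref{refinedABP} with $a^{i\bar j}=g_\varphi^{i\bar j}$. Since $\det(g_\varphi^{i\bar j})=e^{-F}/\det g$, the contact-set integrand becomes, up to fixed constants,
\[
(1+|\nabla F|_\omega)^n\,e^F\,\Phi\bigl(n\log(1+|\nabla F|_\omega)+F\bigr)\,\omega^n.
\]
Choosing $\Phi(t)=(1+t^2)^{p/2}$, which satisfies $\int_0^\infty\Phi^{-1/n}(t)\,dt<\infty$ precisely because $p>n$, the dominant contribution to the integrand is bounded by $|\nabla F|_\omega^n\log^p(|\nabla F|_\omega+1)e^F$, the hypothesized integrable quantity, while remaining terms of the form $F^p e^F$ are absorbed by $\|e^F\|_{L^q(\omega^n)}$ via the elementary inequality $x^p e^x\le C_{p,q}\,e^{qx}$ for any $q>1$.

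Combining these steps, Theorem \ref{refinedABP} produces a bound $\sup_M u\le C$ depending only on the listed data, and unwinding the definition of $u$ gives the claimed estimate. The main obstacle is the critical scaling: at the borderline integrability $\int|\nabla F|_\omega^n e^F\omega^n$ one cannot expect a sup-bound without a log-weight (cf.\ the counterexamples in \cite{MR4693954}), so the $\log^p$ correction must be routed through the $\Phi$-weighted bound of Theorem \ref{refinedABP} rather than the classical one, and one has to verify that each term produced by the Bochner computation is still dominated by the hypothesized quantity. A secondary technical point is the dependence on $\inf_M\varphi$: the ABP constant depends only on the K\"ahler data, while the shift $A(\varphi-\inf_M\varphi)$ in the definition of $u$ is what produces the exponential factor on the right-hand side, so one has to track carefully that no further $\varphi$-dependent constant sneaks into the bound on $\sup_M u$.
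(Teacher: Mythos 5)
There is a genuine gap at the very first step. The pointwise inequality $\Delta_\varphi u\ge -C_0-C_0|\nabla F|_\omega$ for $u=\log(1+|\nabla\varphi|_\omega^2)-A(\varphi-\inf_M\varphi)$ is not something a Bochner computation delivers: writing $w=|\nabla\varphi|_\omega^2$, one has $\Delta_\varphi\log(1+w)=\frac{\Delta_\varphi w}{1+w}-\frac{|\nabla w|^2_{\omega_\varphi}}{(1+w)^2}$, and by Cauchy--Schwarz the negative term is bounded only by (roughly) \emph{twice} the good third-order terms $g_\varphi^{i\bar j}g^{k\bar l}\varphi_{ik}\varphi_{\bar j\bar l}/(1+w)$ available from $\Delta_\varphi w$, so it cannot be absorbed. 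Blocki's gradient estimate gets around this only by using the critical-point identity $\nabla u=0$ at the maximum to convert $\nabla w/(1+w)$ into $A\nabla\varphi$; in an ABP argument you need the differential inequality on the whole contact set, where no such identity is available. This is precisely why the paper does not use a logarithmic test function: it works with $H=e^{-\lambda\varphi}|\nabla\varphi|_\omega^2$ and, crucially, with $H^2$, accepting a right-hand side of the form $e^{-\frac{\lambda}{2}\varphi}H^{3/2}|\nabla F|_\omega+H^2$. As a consequence the entropy entering the refined ABP estimate is \emph{not} uniformly bounded by the hypotheses, as your sketch asserts; it grows with powers of $M=\sup\eta H$, and the heart of the paper's proof is the bookkeeping showing (via Lemma \ref{younginequality}, H\"older with $\|e^F\|_{L^q}$, and the Guo--Phong--Tong bound $\int_M H\,\omega^n\le C$) that after taking the $1/n$-th root every power of $M$ is strictly below $2$, so that $\theta M^2\le C(1+M^{2-\epsilon'})$ closes the argument.

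A second, related gap is the localization. Subtracting a small quadratic bump near the maximum point yields only the vacuous conclusion ``$\epsilon r_0^2\le C$'', since on $\partial B(x_0,r_0)$ your $u$ may be essentially as large as $u(x_0)$; you gain nothing about $\sup_M u$. The paper instead multiplies by a cutoff $\eta$ equal to $1$ near $x_0$ and $1-\theta$ near the boundary, which produces the useful gap $M^2\le(1-\theta)M^2+\cdots$, and the cross term $4H\nabla H\cdot_{\omega_\varphi}\nabla\eta$ created by the cutoff is absorbed by the $2|\nabla H|^2_{\omega_\varphi}$ term coming from $\Delta_\varphi H^2=2H\Delta_\varphi H+2|\nabla H|^2_{\omega_\varphi}$ --- another reason the squaring is essential and a log-type test function with an additive bump cannot simply be substituted. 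Your choice of $\Phi$ and the treatment of the $|F|^{p}e^{F}$ terms via $\|e^F\|_{L^q}$ are in the right spirit (the paper does the analogous splittings), but without a valid differential inequality on the contact set and a localization that actually produces a gap proportional to the quantity being bounded, the argument does not go through.
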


\begin{remark}Compared to Guo-Phong-Tong's results, where they use the classical ABP-estimate from real setting directly, we lower the exponent to be almost sharp. And compared to the results of Guo-Phong-Sturm, where they use the estimate of Green's function, we don't require a uniform lower bound of the volume form (very recently, they extended their results without such a lower bound in \cite{Guo24ii}).
\end{remark}
\begin{proof}
Without loss of generality, we can assume $\inf_M\varphi=0$. 
Let $H:=e^{-\lambda\varphi}|\nabla\varphi|_\omega^2$ for $\lambda=2B+100A$ where $-B$ is the lower bound of the bisectional curvature of $(M,\omega)$ and $A$ to be determined later. The following lemma is standard, c.f.\cite{MR4693954}.
\begin{lemma}We have the following differential inequality\[
	\Delta_\varphi H\geq 2e^{-\lambda\varphi}\nabla F\cdot_\omega\nabla\varphi+(\lambda-2B)H\tr_{\omega_\varphi}\omega-\lambda(n+2)H.
\]
\end{lemma}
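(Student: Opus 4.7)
The plan is to prove this pointwise inequality by the standard K\"ahler Bochner computation combined with the differentiated Monge-Amp\`ere equation. I work at an arbitrary point $p\in M$ in complex coordinates with $g_{i\bar j}(p)=\delta_{ij}$, $\partial_k g_{i\bar j}(p)=0$, and with the frame further rotated so that $\varphi_{i\bar j}(p)=(\mu_i-1)\delta_{ij}$, $\mu_i>0$. In this frame the third-order coordinate partials of $\varphi$ commute and the only curvature contribution in the pointwise expansion enters through $\partial_{\bar j}\partial_i g^{k\bar l}|_p=R_{i\bar jk\bar l}(p)$.

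The first step is the product rule for $H=e^{-\lambda\varphi}|\nabla\varphi|^2$, which gives
\begin{equation*}
\Delta_\varphi H = e^{-\lambda\varphi}\Delta_\varphi|\nabla\varphi|^2 - 2\lambda e^{-\lambda\varphi}\operatorname{Re}\bigl(g_\varphi^{i\bar j}\varphi_i\partial_{\bar j}|\nabla\varphi|^2\bigr) - \lambda(\Delta_\varphi\varphi)H + \lambda^2|\nabla\varphi|^2_{g_\varphi}H.
\end{equation*}
Using the identity $\Delta_\varphi\varphi=n-\tr_{\omega_\varphi}\omega$, the third term on the right already contributes $+\lambda H\tr_{\omega_\varphi}\omega-\lambda nH$, producing the first installment of the key $\tr_{\omega_\varphi}\omega$ term and part of the lower-order linear-in-$H$ error.

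The second step is the K\"ahler Bochner expansion
\begin{equation*}
\Delta_\varphi|\nabla\varphi|^2=g_\varphi^{i\bar j}\bigl(\varphi_{ki\bar j}\varphi_{\bar k}+\varphi_k\varphi_{\bar ki\bar j}+\varphi_{ki}\varphi_{\bar k\bar j}+\varphi_{k\bar j}\varphi_{\bar ki}\bigr)+R_{i\bar jk\bar l}g_\varphi^{i\bar j}\varphi_k\varphi_{\bar l}.
\end{equation*}
Commutativity of partials ($\varphi_{ki\bar j}=\varphi_{i\bar jk}$) together with the differentiated Monge-Amp\`ere identity $g_\varphi^{i\bar j}\varphi_{i\bar jk}=F_k$ (obtained by applying $\partial_k$ to $\log\det g_{\varphi,i\bar j}-\log\det g_{i\bar j}=F$ and using $\partial g|_p=0$) converts the two third-derivative terms into $2\operatorname{Re}\langle\nabla F,\nabla\varphi\rangle_\omega$, matching the first term of the claim. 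The bisectional curvature lower bound gives $R_{i\bar jk\bar l}g_\varphi^{i\bar j}\varphi_k\varphi_{\bar l}\geq -B|\nabla\varphi|^2\tr_{\omega_\varphi}\omega$, and the remaining two summands $S_1:=g_\varphi^{i\bar j}\varphi_{ki}\varphi_{\bar k\bar j}$ and $S_2:=g_\varphi^{i\bar j}\varphi_{k\bar j}\varphi_{\bar ki}$ are nonnegative reservoirs.

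The decisive step is to absorb the mixed term $-2\lambda e^{-\lambda\varphi}\operatorname{Re}(g_\varphi^{i\bar j}\varphi_i\partial_{\bar j}|\nabla\varphi|^2)$ into $S_1,S_2$ and the $\lambda^2|\nabla\varphi|^2_{g_\varphi}H$ term. Expanding $\partial_{\bar j}|\nabla\varphi|^2=\sum_k(\varphi_{k\bar j}\varphi_{\bar k}+\varphi_k\varphi_{\bar k\bar j})$ and applying Cauchy-Schwarz in the $g_\varphi$ inner product to each piece yields a bound of the form
\begin{equation*}
2\lambda\bigl|g_\varphi^{i\bar j}\varphi_i\partial_{\bar j}|\nabla\varphi|^2\bigr|\leq S_1+S_2+\lambda^2|\nabla\varphi|^2_{g_\varphi}|\nabla\varphi|^2,
\end{equation*}
so after multiplying by $e^{-\lambda\varphi}$ the cross term is covered by $S_1$, $S_2$ and the positive $\lambda^2|\nabla\varphi|^2_{g_\varphi}H$ from the product rule. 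Collecting the surviving pieces, the bisectional contribution combines with the product-rule trace term to give $(\lambda-B)H\tr_{\omega_\varphi}\omega$; a second factor of $B$ is then absorbed into this same term by applying the elementary bound $|\nabla\varphi|^2_{g_\varphi}\leq \tr_{\omega_\varphi}\omega\cdot|\nabla\varphi|^2$ to a small residual $\lambda^2|\nabla\varphi|^2_{g_\varphi}H$ arising from a slightly non-tight Cauchy-Schwarz step, producing the claimed $(\lambda-2B)$; the corresponding linear-in-$H$ leftover supplies the extra $-2\lambda H$ needed to upgrade $-\lambda nH$ to $-\lambda(n+2)H$. I expect the main technical obstacle to be precisely this bookkeeping of constants: the squared-Hessian terms and the $\lambda^2|\nabla\varphi|^2_{g_\varphi}H$ reservoir must dominate the cross term while preserving exactly the coefficient $(\lambda-2B)$ in front of the critical $H\tr_{\omega_\varphi}\omega$; this is the same maneuver used in \cite{MR4693954,MR2495772}.
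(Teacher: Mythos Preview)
Your overall architecture---product rule, K\"ahler Bochner expansion, the differentiated Monge--Amp\`ere identity $g_\varphi^{i\bar j}\varphi_{i\bar jk}=F_k$, the bisectional curvature bound, and absorption of the cross term into the positive reservoirs $S_1,S_2,\lambda^2|\nabla\varphi|^2_{g_\varphi}H$---is the standard route the paper cites. The gap is in the absorption step. Your displayed inequality
\[
2\lambda\bigl|g_\varphi^{i\bar j}\varphi_i\,\partial_{\bar j}|\nabla\varphi|^2\bigr|\leq S_1+S_2+\lambda^2|\nabla\varphi|^2_{g_\varphi}|\nabla\varphi|^2
\]
is false in general: splitting $\partial_{\bar j}|\nabla\varphi|^2=\varphi_{k\bar j}\varphi_{\bar k}+\varphi_k\varphi_{\bar k\bar j}$ and applying Cauchy--Schwarz to each piece yields coefficient $2$, not $1$, on the last term. (Concrete failure: $n=1$, $\mu=2$, $\lambda=1$, $\varphi_1=1$, $\varphi_{11}=1$ gives left side $2$ and right side $3/2$.) That leaves a deficit $-\lambda^2|\nabla\varphi|^2_{g_\varphi}H$ after subtracting the product-rule reservoir. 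Your proposed remedy, bounding $|\nabla\varphi|^2_{g_\varphi}\leq\tr_{\omega_\varphi}\omega\cdot|\nabla\varphi|^2$, converts this deficit into $-\lambda^2 e^{\lambda\varphi}H^2\tr_{\omega_\varphi}\omega$, a term \emph{quadratic} in $H$; it cannot be written as $-BH\tr_{\omega_\varphi}\omega$ or $-c\lambda H$, so the claimed ``second factor of $B$'' and ``extra $-2\lambda H$'' do not emerge from this bookkeeping.

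The missing idea is that one of the two cross-term pieces should be computed \emph{exactly} rather than estimated. From $\varphi_{k\bar j}=(g_\varphi)_{k\bar j}-g_{k\bar j}$ one has the identity
\[
g_\varphi^{i\bar j}\varphi_i\varphi_{k\bar j}\varphi_{\bar k}=|\nabla\varphi|^2_\omega-|\nabla\varphi|^2_{g_\varphi},
\]
so this piece of the cross term contributes $-2\lambda e^{-\lambda\varphi}\bigl(|\nabla\varphi|^2-|\nabla\varphi|^2_{g_\varphi}\bigr)=-2\lambda H+2\lambda e^{-\lambda\varphi}|\nabla\varphi|^2_{g_\varphi}$: the first summand is precisely the extra $-2\lambda H$ needed for $-\lambda(n+2)H$, and the second is nonnegative and may be discarded. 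Only the remaining piece $g_\varphi^{i\bar j}\varphi_i\varphi_k\varphi_{\bar k\bar j}$ requires Cauchy--Schwarz, and the bound $2\lambda\bigl|g_\varphi^{i\bar j}\varphi_i\varphi_k\varphi_{\bar k\bar j}\bigr|\leq S_1+\lambda^2|\nabla\varphi|^2_{g_\varphi}|\nabla\varphi|^2$ is absorbed exactly by $e^{-\lambda\varphi}S_1$ from Bochner and $\lambda^2|\nabla\varphi|^2_{g_\varphi}H$ from the product rule, with $S_2\geq0$ left over. This yields $(\lambda-B)H\tr_{\omega_\varphi}\omega-\lambda(n+2)H$, which for $B\geq0$ is stronger than the stated inequality with $(\lambda-2B)$.
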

With the above lemma, we have \[\begin{aligned}
\Delta_\varphi H^2=&2H\Delta_\varphi H+2|\nabla H|_{\omega_\varphi}^2\\
\geq& -4e^{-\frac{\lambda}{2}\varphi}H^{\frac{3}{2}}|\nabla F|_\omega+2(\lambda-2B)H^2\tr_{\omega_\varphi}\omega-2\lambda(n+2)H^2+2|\nabla H|^2_{\omega_\varphi}.
\end{aligned}
\] Assume $H$ attains its maximum at $x_0$.  
Let $\eta$ be a cut-off function on $B(x_0,r)$ with \[\eta=1 \text{ on } B(x_0,\frac{r}{2}), \text{ and }\eta=1-\theta \text{ on } M\setminus B(x_0,\frac{3r}{4}),\] and \[
	|\nabla\eta|_\omega\leq A\frac{\theta}{r}, \quad |\nabla^2 \eta|_{\omega}\leq \frac{A\theta}{r^2}.
\]The constant $A$ depends only on $M,\omega$.

Then, \[
	\begin{aligned}
	\Delta_\varphi(\eta H^2)=&\eta\Delta_\varphi H^2+4H\nabla H\cdot_{\omega_\varphi}\nabla\eta+H^2\Delta_\varphi\eta\\
	\geq &-4\eta e^{-\frac{\lambda}{2}\varphi}H^{\frac{3}{2}}|\nabla F|_\omega+2(\lambda-2B)\eta H^2\tr_{\omega_\varphi}\omega-2\lambda(n+2)\eta H^2+2\eta|\nabla H|^2_{\omega_\varphi}\\
	& -4H|\nabla H|_{\omega_\varphi}|\nabla \eta|_{\omega_\varphi}-H^2|\nabla^2\eta|_{\omega}\tr_{\omega_\varphi}\omega\\
	\geq &-4\eta e^{-\frac{\lambda}{2}\varphi}H^{\frac{3}{2}}|\nabla F|_\omega+2(\lambda-2B)\eta H^2\tr_{\omega_\varphi}\omega-2\lambda(n+2)\eta H^2+2\eta|\nabla H|_{\omega_\varphi}^2\\
	&-2\eta|\nabla H|_{\omega_\varphi}^2-8H^2\eta^{-1}|\nabla\eta|_{\omega_\varphi}^2-H^2|\nabla^2\eta|_{\omega}\tr_{\omega_\varphi}\omega\\
	\geq &-4\eta e^{-\frac{\lambda}{2}\varphi}H^{\frac{3}{2}}|\nabla F|_\omega+(2(\lambda-2B)\eta-8\eta^{-1}|\nabla\eta|^2_{\omega}-|\nabla^2\eta|_{\omega})H^2\tr_{\omega_\varphi}\omega-2\lambda(n+2)\eta H^2\\
	\geq &-4\eta e^{-\frac{\lambda}{2}\varphi}H^{\frac{3}{2}}|\nabla F|_{\omega}-2\lambda(n+2)\eta H^2.
	\end{aligned}
\]
Let $M$ be the maximum value of $\eta H$, then by the complex version of ABP-estimate, we have \[
	M^2\leq (1-\theta)M^2+c_1+c_2\cN_\Phi^{\frac{1}{n}}((4\eta e^{-\lambda\varphi}H^{\frac{3}{2}}|\nabla F|_{\omega}+2\lambda(n+2)\eta H^2)^ne^F)
\]
Here we use the notation of entropy $\cN_\Phi(e^G):=\int e^G\Phi(G)$. 
Let $\Phi(x)=|x|^{p_1}$ for $p_1>n$. We estimate the entropy term carefully in the following,\[
	\begin{aligned}
	&\cN_\Phi((4\eta e^{-\frac{\lambda}{2}\varphi}H^{\frac{3}{2}}|\nabla F|_{\omega}+2\lambda(n+2)\eta H^2)^ne^F)dV\\
	=&\int_{B_r}((4\eta e^{-\frac{\lambda}{2}\varphi}H^{\frac{3}{2}}|\nabla F|_{\omega}+2\lambda(n+2)\eta H^2)^ne^F\left(|n\log((4\eta e^{-\frac{\lambda}{2}\varphi}H^{\frac{3}{2}}|\nabla F|_{\omega}+2\lambda(n+2)\eta H^2))+F|\right)^{p_1}dV\\
	\leq &C(n,p_1)\int_{B_r}(e^{-\frac{\lambda n}{2}\varphi}H^{\frac{3n}{2}}|\nabla F|_{\omega}^n+H^{2n})e^F(\log^
	{p_1}(e^{-\lambda\varphi}H^{\frac{3}{2}}|\nabla F|_\omega+H^2)+|F|^{p_1}+1)dV\\
	\leq& C(n,p_1)\int_{B_r}e^{-\frac{\lambda n}{2}\varphi}H^{\frac{3}{2}}|\nabla F|_{\omega}^ne^F(\log^{p_1}(1+e^{-\frac{\lambda}{2}\varphi}H^{\frac{3}{2}})+\log^{p_1}(1+|\nabla F|_\omega)+\log^{p_1}(H^2+1)+|F|^{p_1}+1)dV\\
	&+\int_{B_r}H^{2n}e^F(\log^{p_1}(1+e^{-\frac{\lambda}{2}\varphi}H^{\frac{3}{2}})+\log^{p_1}(1+|\nabla F|_\omega)+\log^{p_1}(H^2+1)+|F|^{p_1}+1)dV
	\end{aligned}
\]
We use the elementary inequality $(x+y)^p\leq 2^px^p+2^py^p$ ,$\log(x+y)\leq \log(1+x)+\log(1+y)$ and $\log(1+xy)\leq \log(1+x)+\log(1+y)$ for any $x,y>0$ in the third and fourth lines.
Now estimate term by term after expanding the bracket. 
The first term can be estimated as \[
	\int_{B_r}e^{-\frac{\lambda}{2}\varphi}H^{\frac{3n}{2}}|\nabla F|_\omega^n e^F\log^{p_1}(1+e^{-\frac{\lambda}{2}\varphi}H^{\frac{3}{2}})dV\leq C(p_1)(M^{\frac{3n+\epsilon}{2}}+C_\epsilon)\int_{B_r}|\nabla F|^n_\omega e^FdV,
\]since for any $\epsilon>0$ there exists constant $C_\epsilon$ such that $\log(1+x)\leq x^\epsilon+C_\epsilon$ for any $x>0$.
For the second term, \[
	\int_{B_r}e^{-\frac{\lambda}{2}\varphi}H^{\frac{3n}{2}}|\nabla F|_\omega^ne^F\log^{p_1}(1+|\nabla F|_\omega)\leq CM^{\frac{3n}{2}}\int_{B_r}|\nabla F|^{n}\log^{p_1}(1+|\nabla F|_\omega^n)e^FdV.
\]
For the third term\[
		\int_{B_r}e^{-\frac{\lambda}{2}\varphi}H^{\frac{3n}{2}}|\nabla F|_\omega^n e^F\log^{p_1}(1+H^2)dV\leq C(p_1)(M^{\frac{3n+\epsilon}{2}}+C_\epsilon)\int_{B_r}|\nabla F|^n_\omega e^FdV.
\]
For the fourth term,\[
		\int_{B_r}e^{-\frac{\lambda}{2}\varphi}H^{\frac{3n}{2}}|\nabla F|^n_\omega e^F(|F|^{p_1}+1)dV\leq CM^{\frac{3n}{2}}\int_{B_r}(|\nabla F|_\omega^n\log^{q}(|\nabla F|^n+1)+|F|^{p_1}e^{|F|^{\frac{p_1}{q}}}+1)e^FdV,
		\]
		where we used the following inequality with $\epsilon=1, t=q.$ Therefore, if we choose $n<p_1<q$, we have $|F|^{p_1}e^{|F|^{\frac{p_1}{q}}}e^F\leq C_{\epsilon}e^{(1+\epsilon)F}$ for any $\epsilon>0$. Hence the fourth term is also uniformly bounded. 

\begin{lemma}\label{younginequality}
For any $x>0,y>1$, and any $t>0,\epsilon>0$, one has \[
    xy\leq \epsilon y(\log y)^t+xe^{(\epsilon^{-1} x)^{\frac{1}{t}}}
\]
\end{lemma}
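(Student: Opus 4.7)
The plan for Lemma \ref{younginequality} is to reduce the claim to a one-variable inequality via the substitution that matches the exponential/logarithmic structures on the two sides, and then dispose of what remains by a short case split. Conceptually, the statement is a Young-type inequality built from the conjugate-like pair $(s(\log s)^t,\,e^{s^{1/t}})$, rescaled by $\epsilon$, and this viewpoint dictates the substitution.

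Concretely, I would set $u:=(\epsilon^{-1}x)^{1/t}>0$, so that $x=\epsilon u^t$ and $xe^{(\epsilon^{-1}x)^{1/t}}=\epsilon u^t e^u$. Dividing the target inequality through by the positive constant $\epsilon$ reduces it to
\[
u^t y \;\leq\; y(\log y)^t + u^t e^u,
\]
equivalently, $u^t(y-e^u)\leq y(\log y)^t$.

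Next I would split on the sign of $y-e^u$. If $u\geq \log y$, then $e^u\geq y$, so the left-hand side is nonpositive, while the right-hand side is strictly positive since $y>1$ forces $(\log y)^t>0$; the inequality is then immediate. If instead $u<\log y$, both $u^t$ and $y-e^u$ are strictly positive; using $t>0$ and the monotonicity of $s\mapsto s^t$ on $(0,\infty)$ gives $u^t<(\log y)^t$, while trivially $y-e^u<y$. Multiplying the two positive inequalities yields $u^t(y-e^u)<y(\log y)^t$, completing the proof.

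I do not anticipate any genuine obstacle here; the only slightly clever step is spotting the scaling $u=(\epsilon^{-1}x)^{1/t}$, but this is forced by the requirement that $xe^{(\epsilon^{-1}x)^{1/t}}$ become a clean expression in $u$. An alternative route would be to deduce the lemma from the Fenchel--Young inequality $ab\leq e^a+b\log b-b$ combined with an $\epsilon$-rescaling, but the direct case split above is shorter and makes the (near-)equality regime $u\approx \log y$ transparent.
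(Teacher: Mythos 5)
Your proof is correct and takes essentially the same route as the paper's: both arguments reduce the inequality to a two-case split on whether $(\epsilon^{-1}x)^{1/t}$ exceeds $\log y$, with the appropriate term on the right trivially dominating in each case. Your substitution $u=(\epsilon^{-1}x)^{1/t}$ is only a cosmetic reparametrization of the paper's version, which instead divides through by $y$ and compares $x$ with $\epsilon(\log y)^{t}$.
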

\begin{proof}
It's equivalent to show \[x-\epsilon(\log)^{t}\leq x\frac{e^{(\epsilon^{-1}x)^{\frac{1}{t}}}}{y}.\]
If $x\leq \epsilon(\log y)^{t}$, i.e.$y\geq \exp{(\epsilon^{-1}x)^{\frac{1}{t}}}$, the LHS is less than 0, and the inequality is trivial. Otherwise, if $y<\exp{(\epsilon^{-1}x)^{\frac{1}{t}}}$, we have RHS is greater than $x$, and the inequality is also valid. 
\end{proof}
Go back to the proof, for the fifth term\[
\begin{aligned}
	&\int_{B_r}H^{2n}e^F(\log^{p_1}(1+e^{-\lambda\varphi}H^{\frac{3}{2}}))dV\\
	\leq &C(M^{2n+\epsilon-\delta}+C_\epsilon)\int_{B_r}H^{\delta}e^FdV\\
	\leq &C(M^{2n+\epsilon-\delta}+C_\epsilon)\left(\int_{B_r}HdV\right)^{\delta}\left(\int_{B_r}e^{\frac{F}{1-\delta}}dV\right)^{1-\delta}.
	\end{aligned}
\]Choose $\delta$ close to 1, and let $\epsilon<\delta$,  by assumption $||e^F||_{L^p}$ is uniformly bounded for some $p>1$, so the fifth term is bounded uniformly by $CM^{2n+\epsilon-\delta}$.

For the sixth term, \[
	\begin{aligned}
	&\int_{B_r}H^{2n}e^F\log^{p_1}(1+|\nabla F|_\omega)dV\\
	\leq &C\int_{B_r}\left(H^{2n}\log^{p_1}(1+H^{2n})e^F+(1+|\nabla F|_\omega)\log^{p_1}(1+|\nabla F|_\omega)e^F\right)dV\\
	\leq &C(M^{2n+\epsilon-\delta}+C_\epsilon)\left(\int_{B_r}HdV\right)^{\delta}\left(\int_{B_r}e^{\frac{F}{1-\delta}}dV\right)^{1-\delta}\\
	&+\int_{B_r}(1+|\nabla F|_\omega)\log^{p_1}(1+|\nabla F|_\omega)e^FdV.
	\end{aligned}
\]The term $\int_{B_r}(1+|\nabla F|_\omega)\log^{p_1}(1+|\nabla F|_\omega)e^FdV$ above  is uniformly bounded by assumptions.

For the seventh term, \[
	\int_MH^{2n}e^F\log^{p_1}(1+H^2)\leq C(M^{2n+\epsilon-\delta}+C_\epsilon)\left(\int_{B_r}HdV\right)^{\delta}\left(\int_{B_r}e^{\frac{F}{1-\delta}}dV\right)^{1-\delta}.
\]

For the last term \[
\begin{aligned}
	&\int_{B_r}H^{2n}e^F(|F|^p+1)dV\\
	\leq &\int_{B_r}H^{2n}\log^{\frac{1}{2p}}(1+H^{2n})e^FdV+\int_{B_r}(|F|^p+1)e^{(|F|^p+1)^{1/2p}}e^FdV\\
	\leq &C(M^{2n+\epsilon-\delta}+C_\epsilon)\left(\int_{B_r}HdV\right)^{\delta}\left(\int_{B_r}e^{\frac{F}{1-\delta}}dV\right)^{1-\delta}+C(p)\int_{B_r}e^{\frac{F}{1-\delta}}dV
\end{aligned}
\]
\begin{lemma}[Guo-Phong-Tong \cite{MR4693954}]There exists a constant $C$ depends on the parameters as in the theorem, such that $\int_MH\omega^n\leq C$.
\end{lemma}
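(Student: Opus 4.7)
The plan is a one-shot integration by parts that uses nothing about $F$ beyond the normalization $\inf_M\varphi=0$, which forces $0\le e^{-\lambda\varphi}\le 1$ on $M$. On the closed Kähler manifold one has $\int_M\Delta_\omega(e^{-\lambda\varphi})\,\omega^n=0$, and the chain rule
\[
\Delta_\omega(e^{-\lambda\varphi})=e^{-\lambda\varphi}\bigl(\lambda^2|\nabla\varphi|_\omega^2-\lambda\Delta_\omega\varphi\bigr)
\]
immediately yields the identity
\[
\lambda\int_M H\,\omega^n=\int_M e^{-\lambda\varphi}\Delta_\omega\varphi\,\omega^n.
\]

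The next step is to rewrite the right-hand side in wedge form using the standard identity $\Delta_\omega\varphi\,\omega^n=c_n\,\ddc\varphi\wedge\omega^{n-1}=c_n(\omega_\varphi-\omega)\wedge\omega^{n-1}$ for a dimensional constant $c_n$. Since $e^{-\lambda\varphi}\omega^n\ge 0$, dropping the $-\omega$ contribution can only increase the integral; combining this with positivity of $\omega_\varphi\wedge\omega^{n-1}$ and $e^{-\lambda\varphi}\le 1$ gives
\[
\int_M e^{-\lambda\varphi}\Delta_\omega\varphi\,\omega^n\le c_n\int_M e^{-\lambda\varphi}\omega_\varphi\wedge\omega^{n-1}\le c_n\int_M\omega_\varphi\wedge\omega^{n-1}=c_n\int_M\omega^n,
\]
where the final equality uses that $\omega^{n-1}$ is closed (Kähler), so $\ddc\varphi\wedge\omega^{n-1}$ is exact and integrates to zero.

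Combining the two displays one obtains $\int_M H\,\omega^n\le (c_n/\lambda)\int_M\omega^n$. Since $\lambda=2B+100A$, with $B$ a lower bound for the bisectional curvature and $A$ depending on $(M,\omega)$, this is exactly the dependence asserted by the lemma; neither $\|e^F\|_{L^q}$ nor the gradient-entropy bound on $F$ is required at this step (they are used only in the subsequent estimate of the main theorem). There is no serious obstacle in the argument; the only point that needs attention is the Kähler hypothesis, which is essential at the Stokes step—on a merely Hermitian background, torsion terms from $d\omega^{n-1}\ne 0$ would have to be absorbed separately.
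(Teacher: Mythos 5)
Your proof is correct: the identity $\lambda\int_M H\,\omega^n=\int_M e^{-\lambda\varphi}\Delta_\omega\varphi\,\omega^n$ from integrating $\Delta_\omega(e^{-\lambda\varphi})$, together with $\Delta_\omega\varphi\le \tr_\omega\omega_\varphi$, $e^{-\lambda\varphi}\le 1$ (from $\inf_M\varphi=0$ and $\lambda>0$), and Stokes for the closed K\"ahler form $\omega^{n-1}$, gives $\int_M H\,\omega^n\le \frac{n}{\lambda}\int_M\omega^n$, so the bound in fact depends only on $(M,\omega)$ and $\lambda$ and not on $F$. The paper does not reprove this lemma but quotes it from Guo--Phong--Tong, and your weighted integration by parts is essentially the same argument as in that reference, so there is nothing further to reconcile.
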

In summary, we have \[
	M^2\leq (1-\theta)M^{2}+C(n,p,q,\omega,||e^F||_{L^{q}(\omega^n)},||\nabla F||_{L^n\log^p(e^F\omega^n)})(1+M^\frac{3+\epsilon}{2}+M^{\frac{3}{2}}+M^{\frac{2n+\epsilon-\delta}{n}}),
\] which implies an upper bound on $M$. 
\end{proof}

\section{Parabolic ABP-estimate}
In this section we prove a parabolic version of theorem \ref{refinedABP}. We define  $Q_T:=B_1(0)\times (0,T)$, $\partial_PQ_T:=B_1\times\{0\}\cup \partial B_1\times[0,T)$, the parabolic boundary of $Q_T$. Then we have 
\begin{theorem}Let $u\in C^{2,1}(Q_T)\cap C^0(\overline{Q_T})$ satisfy the differential inequality $(-\partial_t+a^{i\bar{j}}\partial_i\partial_{\bar{j}})u\geq f$ for a smooth function $f$. Let $\Phi(x)$ be an increasing positive function and $\int_0^\infty\Phi^{-\frac{1}{n+1}}(t)dt<\infty$. Then there exists a constant $C_1(n,T), C_2(n,\Phi)$ such that \[
	\sup_{Q_T}u\leq \sup_{\partial_PQ_T}u+C_1+C_2\cN_{\Phi}^{\frac{1}{n+1}}(\frac{(f^-)^{n+1}}{(n+1)^{n+1}\mathscr{D}}),
\]where $\mathscr{D}=\det(a_{i,\bar{j}})$, and $\cN_\Phi(\frac{(f^-)^{n+1}}{(n+1)^{n+1}\mathscr{D}})=\int_{Q_T}\Phi(\log(\frac{(f^-)^{n+1}}{(n+1)^{n+1}\mathscr{D}}))\frac{(f^-)^{n+1}}{(n+1)^{n+1}\mathscr{D}}dVdt$.

\end{theorem}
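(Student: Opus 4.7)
The plan is to mirror the proof of Theorem \ref{refinedABP} in the parabolic setting, systematically replacing the elliptic complex Monge-Amp\`ere operator $(\ddc\cdot)^n$ by the parabolic Monge-Amp\`ere operator $\cP(\psi):=(-\partial_t\psi)(\ddc\psi)^n$ on $Q_T$, which is $(n+1)$-homogeneous, so that all exponents $1/n$ shift to $1/(n+1)$. After normalizing $\sup_{\partial_P Q_T}u=0$, set $F:=(f^-)^{n+1}/[(n+1)^{n+1}\mathscr{D}]$, approximate $F$ from above by smooth positive $e^{G_k}$, and set $N_k:=\int_{Q_T}e^{G_k}\Phi(G_k)\,dVdt$, which converges to $\cN_\Phi(F)$. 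The first auxiliary function $\psi_1$ solves the parabolic complex Monge-Amp\`ere equation
\[
(-\partial_t\psi_1)(\ddc\psi_1)^n=\frac{e^{G_k}\Phi(G_k)}{N_k}\,dVdt,\qquad\ddc\psi_1\ge 0,\ -\partial_t\psi_1\ge 0,\ \psi_1|_{\partial_P Q_T}=0,
\]
whose solvability with smooth positive RHS follows by a Krylov-type continuity method. Define $h(s):=-\int_s^\infty\tfrac{q}{\alpha}N_k^{1/(n+1)}\Phi^{-1/(n+1)}(t)\,dt$ and $\psi:=-h(-\alpha\psi_1/q)$; then the chain rule with $h$ concave yields $\cP(\psi)\ge(\alpha/q)^{n+1}h'(-\alpha\psi_1/q)^{n+1}\cP(\psi_1)$, and the same two-case dichotomy as in the elliptic proof gives the pointwise comparison $e^{G_k}\,dVdt\le\cP(\psi)+F'\,dVdt$ with $F':=\min(e^{-\alpha\psi_1/q},e^{G_k})$.

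The key new ingredient is the \emph{parabolic Minkowski inequality}
\[
\cP^{1/(n+1)}(\psi+g)\ge\cP^{1/(n+1)}(\psi)+\cP^{1/(n+1)}(g)
\]
for smooth $\psi,g$ with $\ddc\psi,\ddc g\ge 0$ and $-\partial_t\psi,-\partial_t g\ge 0$; this follows by combining the classical determinantal Minkowski $\det^{1/n}(\ddc\psi+\ddc g)\ge\det^{1/n}(\ddc\psi)+\det^{1/n}(\ddc g)$ with H\"older's inequality applied with exponents $n+1$ and $(n+1)/n$. Introduce the second auxiliary function $\psi_{s,j}$ solving $\cP(\psi_{s,j})=\eta_j(u+\psi-s)F'/A_{s,j}\cdot dVdt$ with zero parabolic boundary data, and form the barrier
\[
\Psi:=u+\psi-s-\epsilon(-\psi_{s,j})^{(n+1)/(n+2)},\qquad\epsilon:=\bigl(\tfrac{n+2}{n+1}\bigr)^{(n+1)/(n+2)}A_{s,j}^{1/(n+2)},
\]
where the exponent $(n+1)/(n+2)$ is the parabolic analogue of the elliptic $n/(n+1)$. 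To show $\Psi\le 0$ for $s\ge s_0:=-h(0)$, assume for contradiction that its maximum (necessarily at an interior point or on the top face $B_1\times\{T\}$) is positive. At this max, $\ddc\Psi\le 0$ and $\partial_t\Psi\ge 0$ combine with the differential inequality $(-\partial_t+a^{i\bar j}\partial_i\partial_{\bar j})u\ge f$ to yield
\[
a^{i\bar j}(\ddc(\psi+g))_{i\bar j}+(-\partial_t(\psi+g))\le f^-.
\]
Matrix AM-GM for $a^{i\bar j}$ against $\ddc(\psi+g)$, followed by the scalar $(n+1)$-term AM-GM $nX^{1/n}+Y\ge(n+1)(XY)^{1/(n+1)}$ with $X=\det(a^{i\bar j})\det(\ddc(\psi+g))$ and $Y=-\partial_t(\psi+g)$, extracts $\cP(\psi+g)\le F$. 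Combined with the chain $F\le e^{G_k}\le\cP(\psi)+F'$, the parabolic Minkowski inequality, and the subadditivity of $x\mapsto x^{1/(n+1)}$, one concludes $\cP(g)\le F'$, which after substituting the definition of $g$ and the choice of $\epsilon$ forces $\eta_j(u+\psi-s)\le\epsilon(-\psi_{s,j})^{(n+1)/(n+2)}$, contradicting $\Psi>0$ at the maximum.

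With $\Psi\le 0$ in hand, De Giorgi iteration runs exactly as in the elliptic proof: setting $\phi(s):=\int_{\Omega_s}F'\,dVdt$ with $\Omega_s=\{u+\psi>s\}$, one shows $t\phi(s+t)\le A_s\le C\phi(s)^{1+\delta}$ for $s\ge s_0$ and some $\delta>0$, which forces $\phi(s)=0$ for $s\ge s_0+O(\phi(s_0)^\delta)$; letting $k\to\infty$ and using $N_k\to\cN_\Phi(F)$ together with $s_0\le(q/\alpha)N_k^{1/(n+1)}\Lambda$ yields the claimed bound with $C_1=C_1(n,T)$ and $C_2=C_2(n,\Phi)$. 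The main technical obstacles are: (i) solvability and sufficient regularity of the parabolic complex Monge-Amp\`ere Dirichlet problem on $Q_T$ with rough positive RHS, to be handled by smoothing the data and taking monotone limits within the construction; and (ii) a parabolic version of Kolodziej's $\alpha$-invariant lemma $\int_{Q_T}e^{-\alpha\psi_{s,j}}\,dVdt\le C(n,T)$ uniformly in $s,j$, which I would establish by combining the classical elliptic Kolodziej estimate slice-by-slice in time with a control on the $t$-fibered Monge-Amp\`ere mass that exploits $-\partial_t\psi_{s,j}\ge 0$, picking up at worst a factor of $T$ from the time integration.
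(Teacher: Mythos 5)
Your outline reproduces the paper's architecture (same $F$, same $h$ built from $\Phi^{-1/(n+1)}$, same barrier with exponent $(n+1)/(n+2)$ and $\epsilon=(\tfrac{n+2}{n+1})^{(n+1)/(n+2)}A_{s,j}^{1/(n+2)}$, same De Giorgi iteration), and your ``parabolic Minkowski inequality'' is correct and plays the same role as the paper's pointwise AM--GM $x+ny^{1/n}\ge(n+1)(xy)^{1/(n+1)}$ at the maximum point. However, there are two genuine gaps in the auxiliary constructions that the rest of the argument rests on. First, your auxiliary problems are posed with zero parabolic boundary data: $(-\partial_t\psi_1)(\ddc\psi_1)^n=e^{G_k}\Phi(G_k)/N_k$ with $\psi_1|_{\partial_PQ_T}=0$ (and similarly for $\psi_{s,j}$). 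This Dirichlet problem is not solvable: on the initial slice $\psi_1(\cdot,0)\equiv 0$ forces $\ddc\psi_1(\cdot,0)=0$, so the parabolic Monge--Amp\`ere measure vanishes at $t=0$ while the prescribed right-hand side is strictly positive; no continuity method or smoothing of the data removes this incompatibility. The paper avoids it in lemma \ref{existenceofinvermongeampereflow} by taking the strictly plurisubharmonic initial value $\varphi_0=|z|^2-1$ and strictly increasing lateral data $-b(t)$ subject to the compatibility condition \eqref{compatibilitycondition}, which in turn requires the right-hand side to be (approximated by functions) constant on $\partial B_1\times[0,T]$; this nonzero boundary data then has to be tracked through the claim $\Psi\le 0$ (whence $b\le 1$, the role of $\varphi_0$, and the $T$-dependence of $C_1$), and its existence theory is the content of the appendix.

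Second, your item (ii) --- the parabolic substitute for Kolodziej's lemma \ref{alphainvariant} --- is sketched by applying the elliptic lemma slice-by-slice, but that requires an upper bound on the fiberwise mass $\int_{B_1}(\ddc\psi_{s,j}(\cdot,t))^n$ for each $t$, and the normalization only controls the space-time integral $\int_{Q_T}(-\partial_t\psi_{s,j})(\ddc\psi_{s,j})^n\,dVdt$. Monotonicity $-\partial_t\psi_{s,j}\ge 0$ does not help: it makes the fiberwise mass nondecreasing in $t$, and that mass can be arbitrarily large on slices where $-\partial_t\psi_{s,j}$ is small without affecting the space-time integral, so no factor-of-$T$ argument closes this. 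The paper instead proves corollary \ref{parabolicalphainvariant} by showing that the energy $\cE(\varphi)=\int_{B_1}(-\varphi)(\ddc\varphi)^n$ (not the mass) is controlled along the flow, and then comparing $-(u+b(t))$ with $A(-\psi)^{n/(n+1)}$ for an elliptic Monge--Amp\`ere solution $\psi$ with right-hand side $(-u-b)(\ddc u)^n/\cE$, applying the elliptic Kolodziej lemma once. Without this (or some other valid replacement), the uniform $L^q$ bound on $e^{-\alpha\psi_1/q}$ and the bound on $\Lambda=\int(-\psi_{s,j})^{(n+1)p/(n+2)}F'\,dVdt$ that give $A_s\le C\phi(s)^{1+\delta}$, as well as the size of $s_0$, are unjustified, so the iteration does not close as written.
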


We need a lemma to construct the auxiliary comparison function. 
\begin{lemma}\label{existenceofinvermongeampereflow}Given any $T>0$, let $\varphi_0=|z|^2-1$, and $f\in C^\infty(\overline{Q_T})$ be a smooth function and $f>0$ in $\overline{Q_T}$. $b(t)\geq 0$ is a nonnegative strict increasing function, i.e. $b'(t)>0$ for $t\in [0,T]$ and $b(0)=0$. Then the equation \begin{equation}\label{auxiliarylocalflow}
	(-\partial_t u)\det(u_{i\bar{j}})=f, \quad u(0)=\varphi_0, \quad u|_{\partial B_1}=-b(t), \quad \ddc u>0.
\end{equation} has an  unique solution $u\in C^\infty(Q_T)\cap C^{2,1}(\overline{Q_T})$ if the following compatibility condition is satisfied \begin{equation}\label{compatibilitycondition}
	b'(0)\det((\varphi_0)_{i\bar{j}})=f(x,0), \quad x\in \partial{B_1}.
\end{equation} 
\end{lemma}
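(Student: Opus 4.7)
The plan is to prove existence via the continuity method, with uniqueness following from the comparison principle. The linearization of $\mathcal{M}(u) := (-\partial_t u)\det(u_{i\bar j})$ at any admissible $u$ (for which $\ddc u > 0$ and $-\partial_t u > 0$) is
\[
\mathcal{M}'_u(\phi) = \det(u_{i\bar j})\bigl(-\partial_t\phi + (-\partial_t u)\,u^{i\bar j}\phi_{i\bar j}\bigr),
\]
which is uniformly parabolic on any compact set where $u$ is smooth and admissible. Uniqueness follows from the parabolic maximum principle applied to the difference of two solutions (which, after writing the equation in logarithmic form $\log(-\partial_t u)+\log\det(u_{i\bar j})=\log f$ and subtracting, satisfies a homogeneous linear parabolic equation with vanishing parabolic-boundary data).

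For existence, fix a constant $c_0 \in (0,\min_{\overline{Q_T}} f\,]$ and consider the one-parameter family
\[
f_s(x,t) = (1-s)c_0 + s\,f(x,t), \qquad b_s(t) = (1-s)c_0 t + s\,b(t), \qquad s\in[0,1].
\]
Each $f_s$ is smooth and positive, each $b_s$ satisfies $b_s(0)=0$ with $b_s'(t)>0$, and since $\det((\varphi_0)_{i\bar j})\equiv 1$ the compatibility condition $b_s'(0)\det((\varphi_0)_{i\bar j})=f_s(x,0)$ on $\partial B_1$ is preserved. At $s=0$ the explicit function $u_0(x,t)=\varphi_0(x)-c_0 t$ is a smooth admissible solution, so the set $I\subseteq[0,1]$ for which \eqref{auxiliarylocalflow} has a solution in $C^\infty(Q_T)\cap C^{2,1}(\overline{Q_T})$ contains $0$. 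Openness of $I$ is a standard application of the implicit function theorem in the parabolic H\"older spaces $C^{2+\alpha,1+\alpha/2}(\overline{Q_T})$: the linearized problem is a uniformly parabolic Dirichlet / initial-value problem with smooth coefficients and (by construction of $f_s,b_s$) compatible data, hence invertible by the classical Solonnikov / Ladyzhenskaya--Solonnikov--Ural'tseva theory.

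Closedness reduces to uniform a priori estimates $\|u_s\|_{C^{2+\alpha,1+\alpha/2}(\overline{Q_T})}\leq C$ independent of $s$. These split into: a $C^0$ bound from comparison with $u_0(x,t)=\varphi_0(x)-c_0 t$ from one side and a suitable sub-solution on the other; two-sided bounds on $v_s:=-\partial_t u_s$ obtained by applying the parabolic maximum principle to $\log v_s$ after differentiating the log-form of the equation in $t$; a $C^1$ bound via a Bernstein-type argument for $|\nabla u_s|^2$ combined with boundary gradient barriers; and the crucial $C^2$ estimate, which follows from the parabolic analogue of Yau's second-order argument applied to $\log(n+\Delta u_s)-A u_s$ for $A$ large, coupled with the Caffarelli--Kohn--Nirenberg--Spruck boundary double-normal estimate adapted to the strictly pseudoconvex ball $B_1$. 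Once $C^2$ is controlled the equation becomes uniformly parabolic, and Evans--Krylov together with Schauder bootstrapping upgrades the solution to $C^\infty(Q_T)\cap C^{2,1}(\overline{Q_T})$. The main obstacle throughout is the parabolic corner $\partial B_1\times\{0\}$: the assumed first-order matching $b'(0)\det((\varphi_0)_{i\bar j})=f(x,0)$ is precisely what allows one to construct barriers that are simultaneously compatible with the initial data $\varphi_0$ and the lateral data $-b(t)$; since we only require $C^{2,1}(\overline{Q_T})$ (not $C^\infty$) globally, no higher-order compatibility is needed.
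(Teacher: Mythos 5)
Your proposal is correct in outline and, at bottom, follows the same strategy as the paper: everything is reduced to a priori estimates up to second order on $\overline{Q_T}$, after which uniform parabolicity, Evans--Krylov/Krylov boundary estimates and Schauder bootstrapping yield $C^\infty(Q_T)\cap C^{2,1}(\overline{Q_T})$, and uniqueness follows from the comparison principle applied to the logarithmic form of the equation. The paper leaves the continuity-method scaffolding implicit and only proves the estimates, so your explicit path $(f_s,b_s)$ with the exact starting solution $u_0=\varphi_0-c_0t$, together with the check that the corner compatibility condition is preserved along the path, is a helpful concretization rather than a departure (your $C^0$ comparison should of course use barriers matching the boundary data $-b_s(t)$, as in the paper's subsolution $\underline{u}=-b(t)+A\varphi_0$, since $u_0$ has different lateral values). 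Where the routes genuinely differ is in how the second-order estimates are obtained. The paper does not use a Yau-type quantity $\log(n+\Delta u)-Au$: it differentiates the equation twice in a fixed real direction, exploits concavity to drop the bad quadratic term, and applies the maximum principle to $\nabla^2_{ee}u+C|z|^2$, thereby reducing the interior bound to boundary bounds. On the boundary, the paper's main effort is the mixed tangential-normal estimate, carried out with the tangential operator $T=\partial_{s_\alpha}+\rho_{s_\alpha}\partial_{s_{2n}}$ and a barrier built from $\underline{u}$; the double-normal estimate, which is the only boundary step your sketch names (attributing it to CKNS), is in fact the easy one here because the lateral data is spatially constant: $\Delta u\geq n\det(u_{i\bar j})^{1/n}\geq c>0$ plus Hopf's lemma give $u_{s_{2n}}\leq -c'<0$, hence a positive lower bound on the tangential Hessian, which combined with $\det(u_{i\bar j})=f/(-u_t)\leq C$ controls $u_{s_{2n}s_{2n}}$. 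So your appeal to CKNS-type boundary theory is the right toolbox, but the step it must actually deliver is the tangential-normal one, which your sketch passes over; once that barrier argument is carried out (or quoted in its parabolic form, as the paper does following CKNS and Guan), the rest of your plan goes through.
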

We leave the proof of this lemma in the appendix.

To get a exponential integrability proposition like lemma \ref{alphainvariant}, we consider the functional \[
	\cE(\varphi):=\int_{B_1}(-\varphi)(\ddc\varphi)^n, \quad \varphi\in \PSH_0(B_1).
\]
Here, we use the notation $\PSH_0$ to denote the set of $\PSH$-functions which vanishes on the boundary $\partial B_1$.
Then direct computation shows that \[
\begin{aligned}
\delta \cE(\varphi)(-\psi)=&\int_{B_1}(-\psi)(\ddc\varphi)^n+(-\varphi)n\ddc\psi\wedge(\ddc\varphi)^{n-1}\\
=&\int_{B_1}(-\psi)(\ddc\varphi)^n+\int_{B_1}(-\psi)n\ddc\varphi\wedge(\ddc\varphi)^{n-1}\\
=&(n+1)\int_{B_1}(-\psi)(\ddc\varphi)^n.
\end{aligned}
\]
It follows that along the flow \eqref{auxiliarylocalflow}, we have \[
	\frac{\partial}{\partial t}\cE(u(t)+b(t))=(n+1)\int_{B_1}(-u_t-b'(t))(\ddc u)^n\leq (n+1)\int_{B_1}fdVdt,
\]since we assume that $b'(t)\geq 0.$
We get the following  \begin{lemma}
Let $u$ be a smooth solution to equation \eqref{auxiliarylocalflow}, then for any $t\in[0,T]$, we have \[
	\cE(u(t)+b(t))\leq \cE(\varphi_0)+(n+1)\int_0^t\int_{B_1}fdVdt.
\]
\end{lemma}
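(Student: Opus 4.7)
The plan is to integrate in time the differential inequality that the author has already derived in the paragraph immediately preceding the lemma. The key preliminary, shown in that same paragraph, is the first variation identity $\delta\cE(\varphi)(-\psi) = (n+1)\int_{B_1}(-\psi)(\ddc\varphi)^n$ for $\varphi,\psi \in \PSH_0(B_1)$: since both $\varphi$ and $\psi$ vanish on $\partial B_1$, integration by parts yields $\int_{B_1}(-\varphi)\, n\,\ddc\psi \wedge (\ddc\varphi)^{n-1} = \int_{B_1}(-\psi)\, n\,\ddc\varphi \wedge (\ddc\varphi)^{n-1}$ with no boundary contribution, and the two terms appearing in $\delta\cE$ combine to produce the factor $n+1$.

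Next I would apply the identity along the curve $v(t) := u(t) + b(t)$. Since $u|_{\partial B_1} = -b(t)$ we have $v(t) \in \PSH_0(B_1)$ for each $t$; because $b(t)$ depends only on time, $\ddc v = \ddc u$ and $\partial_t v = \partial_t u + b'(t)$. By lemma \ref{existenceofinvermongeampereflow} the solution $u$ lies in $C^{2,1}(\overline{Q_T})$, which is enough regularity to differentiate under the integral and to legitimize the symmetrization above. The chain rule then gives
\begin{equation*}
\frac{d}{dt}\cE(v(t)) = (n+1)\int_{B_1}\bigl(-\partial_t u - b'(t)\bigr)(\ddc u)^n.
\end{equation*}
Using the flow equation $(-\partial_t u)(\ddc u)^n = f\,dV$ together with the assumption $b'(t) \ge 0$ (so that the contribution $-b'(t)(\ddc u)^n$ is pointwise nonpositive) produces the bound $\frac{d}{dt}\cE(v(t)) \le (n+1)\int_{B_1} f\,dV$.

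Finally I would integrate this scalar inequality from $0$ to $t$. The compatibility $b(0)=0$ combined with the initial condition $u(0)=\varphi_0$ gives $v(0)=\varphi_0$, and hence $\cE(v(0))=\cE(\varphi_0)$, which delivers exactly the stated estimate. I do not anticipate any real obstacle: the only points requiring care are the vanishing of boundary terms in the integration-by-parts step (immediate from $v(t)\!\mid_{\partial B_1}=0$) and the justification for differentiating under the integral, both of which are guaranteed by the $C^{2,1}(\overline{Q_T})$ regularity furnished by lemma \ref{existenceofinvermongeampereflow}.
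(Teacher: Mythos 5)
Your proposal is correct and follows essentially the same route as the paper: the author derives the same first variation identity $\delta\cE(\varphi)(-\psi)=(n+1)\int_{B_1}(-\psi)(\ddc\varphi)^n$ by the same symmetrization, applies it along $u(t)+b(t)$, uses the flow equation and $b'(t)\ge 0$ to get $\frac{d}{dt}\cE(u(t)+b(t))\le (n+1)\int_{B_1}f\,dV$, and integrates in time. Your added remarks on boundary terms and regularity are fine but not a different argument.
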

\begin{corollary}\label{parabolicalphainvariant}There exist uniform constants $\alpha, C$ depending on $n,\int_0^T\int_{B_1}fdVdt,\cE(\varphi_0)$ such that \[
	\int_{B_1}e^{-\alpha u(t)}dV\leq Ce^{\alpha b(t)}.
\]
\end{corollary}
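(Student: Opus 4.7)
The plan is to reduce the corollary, via the rescaling $\varphi(t) := u(t) + b(t) \in \PSH_0(B_1)$, to the statement
\[\int_{B_1} e^{-\alpha \varphi(t)}\,dV \leq C\]
with $C = C(n, E_0)$, where $E_0 := \cE(\varphi_0) + (n+1)\int_0^T\!\int_{B_1} f\,dV\,dt$ is the uniform energy bound on $\varphi(t)$ supplied by the preceding lemma. In other words I need a Moser--Trudinger-type exponential integrability estimate for psh functions of bounded energy, and I will derive it by chaining Chebyshev, the Bedford--Taylor capacity inequality, and Kolodziej's $\alpha$-invariant (Lemma~\ref{alphainvariant}) applied to the relative extremal functions of the sublevel sets of $\varphi(t)$.

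Fix $t$ and write $\varphi = \varphi(t)$. For $s > 0$ set $V_s := \{\varphi < -s\} \Subset B_1$ (compact containment uses that $\varphi$ is continuous with $\varphi|_{\partial B_1} = 0$). The Chebyshev step gives
\[\int_{V_s}(\ddc\varphi)^n \leq \frac{1}{s}\int_{V_s}(-\varphi)(\ddc\varphi)^n \leq \frac{\cE(\varphi)}{s} \leq \frac{E_0}{s}.\]
The standard Bedford--Taylor inequality $s^n \mathrm{Cap}(V,B_1) \leq \int_V(\ddc\psi)^n$ (valid for $\psi \in \PSH_0(B_1)$ with $\psi \leq -s$ on $V$) then yields
\[\mathrm{Cap}(V_s, B_1) \leq E_0\, s^{-(n+1)}.\]
Let $u_{V_s}$ be the relative extremal function of $V_s$; then $u_{V_s} \in \PSH_0(B_1)$, $u_{V_s} = -1$ on $V_s$, and $\int(\ddc u_{V_s})^n = \mathrm{Cap}(V_s, B_1)$. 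Its normalization $\tilde u := u_{V_s}/\mathrm{Cap}(V_s, B_1)^{1/n}$ has unit Monge--Amp\`ere mass, so Lemma~\ref{alphainvariant} gives constants $\beta > 0$, $C_K > 0$ (depending only on $n$) with
\[|V_s|\,\exp\!\bigl(\beta\, \mathrm{Cap}(V_s, B_1)^{-1/n}\bigr) \leq \int_{B_1} e^{-\beta \tilde u}\,dV \leq C_K,\]
which combined with the capacity bound gives $|V_s| \leq C_K \exp(-\beta_0\, s^{(n+1)/n})$ for $\beta_0 := \beta E_0^{-1/n}$.

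Finally, since $\varphi \leq 0$, for any $\alpha > 0$,
\[\int_{B_1} e^{-\alpha\varphi}\,dV = |B_1| + \alpha\int_0^\infty e^{\alpha s}|V_s|\,ds \leq |B_1| + \alpha C_K\int_0^\infty \exp\bigl(\alpha s - \beta_0 s^{(n+1)/n}\bigr)ds,\]
and because $(n+1)/n > 1$ the integral converges for every $\alpha > 0$, giving a bound $C = C(n, E_0, \alpha)$; fixing any $\alpha > 0$ completes the proof. The main technical point is the Bedford--Taylor capacity versus Monge--Amp\`ere mass inequality together with the standard properties of the relative extremal function, both classical in pluripotential theory; a more direct alternative would be to invoke the known exponential integrability of psh functions in the finite-energy class $\cE^1$, as developed by Berman--Boucksom--Guedj--Zeriahi.
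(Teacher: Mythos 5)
Your argument is correct, but it takes a genuinely different route from the paper. The paper proves the corollary PDE-style, in the spirit of Chen--Cheng and Guo--Phong--Tong: it solves the auxiliary Dirichlet problem $(\ddc\psi)^n=(-u-b(t))(\ddc u)^n/\cE(u+b(t))$ with $\psi|_{\partial B_1}=0$, runs a maximum-principle comparison on $\Psi=-(u+b(t))-A(-\psi)^{n/(n+1)}$ to get the pointwise bound $-(u+b(t))\leq A(-\psi)^{n/(n+1)}$ with $A\sim\cE^{1/(n+1)}$, and then applies Kolodziej's Lemma \ref{alphainvariant} directly to $\psi$, which yields the Moser--Trudinger inequality $\int_{B_1}\exp\{\alpha\frac{n}{n+1}(-u-b(t))^{(n+1)/n}\cE^{-1/n}\}\leq C$ and hence the corollary. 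You instead use the classical pluripotential chain: Chebyshev on the energy, the Bedford--Taylor capacity/mass comparison, the volume--capacity inequality via relative extremal functions (itself an application of Lemma \ref{alphainvariant}), and a layer-cake integration; this gives the decay $|\{u+b(t)<-s\}|\leq C\exp(-\beta_0 s^{(n+1)/n})$, which is essentially the same Moser--Trudinger-strength conclusion. What the paper's approach buys is self-containedness within its own framework (only Lemma \ref{alphainvariant} plus the auxiliary-equation trick already used throughout, and no capacity theory); what yours buys is that it avoids solving any auxiliary Monge--Amp\`ere equation and works for arbitrary finite-energy $\varphi\in\PSH_0(B_1)$ with no smoothness, at the price of importing standard but heavier pluripotential machinery. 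One small imprecision: the comparison inequality you quote, $s^n\mathrm{Cap}(V,B_1)\leq\int_V(\ddc\psi)^n$ for $\psi\leq -s$ on $V$, is not quite the standard statement; the correct version carries a level shift, $t^n\mathrm{Cap}(\{\psi<-s-t\},B_1)\leq\int_{\{\psi<-s\}}(\ddc\psi)^n$, so you should apply it with levels $s/2$ and $t=s/2$, which gives $\mathrm{Cap}(V_s,B_1)\leq 2^{n+1}E_0\,s^{-(n+1)}$ --- the extra dimensional constant is harmless and the rest of your argument goes through unchanged.
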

\begin{proof}
Let $\psi$ be the solution to the complex Monge-Ampere equation\[
	(\ddc\psi)^n=\frac{(-u-b(t))(\ddc u)^n}{\cE(u+b(t))},\quad \varphi|_{\partial B_1}=0, \text{ and } \ddc\psi>0.
\]
Consider the function  \[
	\Psi:=-(u+b(t))-A(-\psi)^{\frac{n}{n+1}}.
\]We claim that $\Psi\leq 0$. 
Since $u|_{\partial B_1}=0$, we can assume that maximum is attained at interior of $\partial B_1$, say $x_0$. Then at $x_0$, we have \[
	0\geq -\ddc u+A\frac{n}{n+1}(-\psi)^{-\frac{1}{n+1}}\ddc\psi
\]Taking $n$-th power we get \[
	(\ddc u)^n\geq\left(\frac{A n}{n+1}\right)^{n}(-\psi)^{-\frac{n}{n+1}}\frac{(-u-b(t))(\ddc u)^n}{\cE(u+b(t))}.
\]By choice of $A=\left(\frac{n+1}{n}\right)^{\frac{n}{n+1}}\cE(u+b(t))^{\frac{1}{n+1}},$ the above inequality is equivalent to $\Psi\leq 0.$ Then by lemma \ref{alphainvariant}, we get \[
	\int_{B_1}\exp\{\alpha\frac{n}{n+1}\frac{(-u-b(t))^{\frac{n+1}{n}}}{\cE^{\frac{1}{n}}(u+b(t))}\}\leq C.
\]
\end{proof}
\begin{remark}\begin{enumerate}
\item The above Moser-Trudinger type estimate was also proved by Wang-Wang-Zhou in \cite{MR4155289} by the local Monge-Ampere flow. We give a short proof combining the idea of Chen-Cheng that compare the function with the solution to an auxiliary complex Monge-Ampere equation. Here we admit the  Kolodziej's lemma \ref{alphainvariant} while  it's also proved in Wang-Wang-zhou's paper by a pure PDE method.
\item For any given $T$, and $f$ which equals to a positive constant $c$ on the vertical boundary $\partial B_1\times[0,T]$, we can find a strictly increasing function $b$ such that $b$ satisfies the compatibility condition \eqref{compatibilitycondition} and $b(t)\leq 1$ for all $t\in [0,T].$ It means that for any $f$ which equals to a positive constant, one can adjust the boundary condition $u(x,t)=-b(t)$ to ensure that along the flow we have \[
	\int_{B_1}e^{-\alpha u(t)}dV\leq C, \text{ uniformly. }
\]
\end{enumerate}
\end{remark}

Firstly, we let $\psi$ be the solution to \[
	(-\partial_t\psi_1)(\ddc \psi_1)^n=\frac{e^G\Phi(G)}{\cN_\Phi(G)}, \quad \psi_1=-b_G(t) \text{ on } \partial B_1, \quad \psi_1|_{t=0}=\varphi_0,
\]
where $\Phi(s)$ is a given increasing function mapping $\RR$ to $\RR$, $G$ is a smooth function being constant on $\partial B_1\times [0,T]$, $b_G(t)$ a strict increasing function satisfying the compatibility condition and $b(t)\leq 1$ and the Nash entropy is defined by \[
	\cN_\Phi(G):=\int_0^T\int_{B_1}G\Phi(\log G)dVdt.
\] 
The existence of solution to such equation guaranteed by lemma \ref{existenceofinvermongeampereflow}. 
Given any $q>1$, we define the function $h$ as \[
	h(s)=-\int_0^s\frac{q}{\alpha}\frac{\cN^{\frac{1}{n+1}}_\Phi(G)}{\Phi^{\frac{1}{n+1}}(t)}dt.
\]
Then a similar computation as in the elliptic case shows that \begin{equation}\label{paraboliccomparison}
e^G\leq (-\partial_t\psi)(\ddc\psi)^n/dV+e^{-\frac{\alpha}{q}\psi_1}.
\end{equation}
Moreover, if we let $F=\min\{e^{-\frac{\alpha}{q}\psi_1},e^G\}$, we have \[
	e^G\leq  (-\partial_t\psi)(\ddc\psi)^n/dV+F
\]

From the corollary \ref{parabolicalphainvariant}, we know that $e^{-\frac{\alpha}{q}\psi_1}$ is uniformly bounded in $L^q$.

Now we consider another auxiliary equation \[
	(-\partial_t\psi_{s,j})(\ddc\psi_{s,j})=\frac{\eta_j(u+\psi-s)FdV}{A_{s,j}},\quad \psi_{s,j}|_{\partial B_1}=-b_s(t), \quad \psi_{s,j}|_{t=0}=\varphi_0,
\]
where $A_{s,j}=\int_{Q_T}\eta_j(u+\psi-s)FdVdt$, $\eta_j(x)$ is a smooth approximate of $x_+$ and satisfies $\eta(x)=\frac{1}{j}$ for $x\leq 0$. Since $u+\psi\leq 0$ on boundary $\partial B_1\times [0,T]$, we know that $\frac{\eta_j(u+\psi-s)F}{A_{s,j}}$ equals a positive constant on $\partial B_1\times [0,T]$. Therefore one can choose a function $b_s(t)$ satisfies the boundary compatibility condition and $b_s(t)\leq 1$. Let $\Omega_s:=\{u+\psi-s>0\}$.
Consider the function $\Psi=u+\psi-s-A(-\psi_{s,j})^{\frac{n+1}{n+2}},$ here $A$ is a positive constant to be specific later. We claim that $\Psi\leq 0$. 
On the parabolic boundary of $Q_T$, we have $\Psi\leq 0$. We can assume that the maximum is attained at $x_0\in \Omega_s$, then at $x_0$ we have \[
	\begin{aligned}
	0\geq &-\partial_t\Psi+a^{i\bar{j}}\partial_{i}\partial_{\bar{j}}\Psi \\
	\geq &-\partial_t u+a^{i\bar{j}}u_{i\bar{j}}+(-\partial_t\psi)+a^{i\bar{j}}\psi_{i\bar{j}}-\partial_t\psi_{s,t}+a^{i\bar{j}}A\frac{n+1}{n+2}(-\psi_{s,t})^{-\frac{1}{n+2}}(\psi_{s,j})_{i\bar{j}}\\
	\geq &-\partial_t u+a^{i\bar{j}}u_{i\bar{j}}+(-\partial_t\psi)+n\det(a^{i\bar{j}})^{\frac{1}{n}}\det(\psi_{i\bar{j}})^{\frac{1}{n}}\\
	&-\partial_t\psi_{s,t}+A\frac{n+1}{n+2}(-\psi_{s,t})^{-\frac{1}{n+2}}\det^{\frac{1}{n}}(a^{i\bar{j}})\det^{\frac{1}{n}}(\psi_{s,j})_{i\bar{j}}\\
	\geq &-\partial_t u+a^{i\bar{j}}u_{i\bar{j}}+(n+1)(-\partial_t\psi)^{\frac{1}{n+1}}\det(a^{i\bar{j}})^{\frac{1}{n+1}}\det(\psi_{i\bar{j}})^{\frac{1}{n+1}}\\
	&+(n+1)(-\partial_t\psi_{s,j})^{\frac{1}{n+1}}\det(a^{i\bar{j}})^{\frac{1}{n+1}}\det((\psi_{s,j})_{i\bar{j}})^{\frac{1}{n+1}}
	\end{aligned},
\]where we have used the arithmetic-geometric mean value inequality in the last line $$x+ny^{\frac{1}{n}}\geq (n+1)x^{\frac{1}{n+1}}y^{\frac{1}{n+1}}.$$
Let $e^G=\frac{(f^{-})^{n+1}}{(n+1)^{n+1}\mathscr{D}},$ (approximate $G$ from above by smooth functions equaling positive constant on the boundary $\partial B_1\times [0,T]$ if necessary),  then we get \[
	e^G\geq (-\partial_t\psi)\det(\psi_{i\bar{j}})+ \left(\frac{A(n+1)}{n+2}\right)^{n+1}(-\psi_{s,j})^{-\frac{n+1}{n+2}}(-\partial_t\psi_{s,j})\det((\psi_{s,j})_{i\bar{j}})
	\]
	Combine with \eqref{paraboliccomparison}, we get \[
         F\geq\left(\frac{A(n+1)}{n+2}\right)^{n+1}(-\psi_{s,j})^{-\frac{n+1}{n+2}}\eta_{j}(u+\psi-s)FA_{s,j}^{-1}.
	\]
	So if we choose $A=\left(\frac{n+2}{n+1}\right)^{\frac{n+1}{n+2}}A_{s,j}^{\frac{1}{n+2}},$ we get the desired estimate. 

Next we can begin the iteration process. Let $\phi(s)=\int_{\Omega_s}FdVdt$ and $A_s:=\int_{\Omega_s}(u+\psi-s)_+FdVdt.$ Then \[
	\begin{aligned}
	A_s\leq &c_nA_{s,j}^{\frac{1}{n+2}}\int_{\Omega_s}(-\psi_{s,j})^{\frac{n+1}{n+2}}FdVdt\\
	\leq & c_nA_{s}^{\frac{1}{n+2}}\left(\int_0^T\int_{B_1}(-\psi_{s,j})^{\frac{(n+1)p}{n+2}}FdVdt\right)^{\frac{1}{p}}\left(\int_{\Omega_s}FdVdt\right)^{1-\frac{1}{p}}
	\end{aligned}
\]
Choose $p>n+2$, then if we let $\Lambda:=\int_0^T\int_{B_1}(-\psi_{s,j})^{\frac{(n+1)p}{n+2}}FdVdt$, we get \[
	A_s\leq C(n,p)\Lambda^{\frac{n+2}{(n+1)p}}\phi(s)^{\frac{n+2}{n+1}(1-\frac{1}{p})}=C(n,p)\Lambda^{\frac{n+2}{(n+1)p}}\phi(s)^{1+\delta}.
\]
By De Giorgi iteration lemma, we get $\phi(s)=0$ for all $s\geq s_{\infty}$ with \[
	s_\infty=\frac{C(n,p)\Lambda^{\frac{n+2}{(n+1)p}}\phi(0)^\delta}{1-2^{-\delta}}.
\]
Since $F\leq e^G$, we have $\phi(0)\leq \int_0^T\int_{B_1}e^GdVdt$, and \[
	\Lambda\leq \min\{CT, \int_{Q_T}e^{q^*G}dVdt\}, \text{ where } \frac{1}{q^*}+\frac{1}{q}=1.
\]


\appendix 
\section{Existence of the inverse Monge-Ampere flow}
The existence of solution to equation \eqref{existenceofinvermongeampereflow} is guaranteed by the following a prior estimate\begin{theorem}
Let $u\in C^4{({Q_T}})\cap C^{2,1}(\overline{Q_T})$ be a smooth solution to the equation \eqref{existenceofinvermongeampereflow}, then there exists constant $C>0$ depending on $T,f,b,n$ such that \[
	|u|_{C^{2,1}(\overline{Q_T})}\leq C.
\]
\end{theorem}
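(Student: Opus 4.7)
The plan is to establish, in order: (i) a $C^0$ bound on $u$, (ii) a two-sided bound on $u_t$ (equivalently, on $\det(u_{i\bar{j}})$), (iii) a boundary $C^2$ bound, and (iv) an interior $C^2$ bound via the maximum principle applied to a suitable auxiliary function. Once $u$ is controlled in $C^2$ with $\ddc u$ uniformly bounded below and above, the equation $\log(-u_t)+\log\det(u_{i\bar{j}})=\log f$ is uniformly parabolic and concave, so parabolic Evans--Krylov upgrades the estimate to $C^{2,\alpha}$ in space-time, which more than suffices for the claimed $C^{2,1}$ bound.

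For (i), the fact that $\ddc u>0$ forces $u(\cdot,t)$ to be strictly plurisubharmonic on $B_1$, so $\sup_{\bar B_1}u(\cdot,t)=\sup_{\partial B_1}u(\cdot,t)=-b(t)\le 0$. A lower bound comes from a subsolution of the form $v(x,t)=A(|z|^2-1)-Bt-b(t)$: choose $A,B$ large so that $(-v_t)\det(v_{i\bar{j}})=(B+b'(t))A^n\ge\max_{\overline{Q_T}}f$ and $v\le u$ on the parabolic boundary, then invoke the comparison principle for the inverse Monge--Amp\`ere flow to get $u\ge v\ge-A-BT-b(T)$. For (ii), writing $w:=-u_t>0$ and differentiating $\log w+\log\det(u_{i\bar{j}})=\log f$ in $t$ yields the linear parabolic equation
\[
\partial_t w - w\,u^{i\bar{j}}\,\partial_i\partial_{\bar{j}}w = w(\log f)_t.
\]
On the parabolic boundary, $w(\cdot,0)=f(\cdot,0)$ (since $(\varphi_0)_{i\bar{j}}=\delta_{ij}$) and $w|_{\partial B_1\times[0,T]}=b'(t)$ are both bounded, with the compatibility condition \eqref{compatibilitycondition} ensuring continuity at the parabolic corner. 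A weighted maximum principle applied to $e^{-\lambda t}\log w$ (in both directions, using that $f$ is bounded above and strictly positive on $\overline{Q_T}$) yields $c_1\le w\le C$ throughout $\overline{Q_T}$, hence matching two-sided bounds on $\det(u_{i\bar{j}})$.

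The $C^2$ estimate is the heart of the argument. On $\partial B_1\times[0,T]$, tangential second spatial derivatives follow from differentiating the Dirichlet condition $u=-b(t)$, tangential--normal ones from combining this with the equation, and the pure normal spatial second derivative is bounded from above by a barrier of the form $\sigma|z-z_0|^2-\tau d(z)$ with $d$ the signed distance to $\partial B_1$, in the style of the Caffarelli--Kohn--Nirenberg--Spruck theory of the Dirichlet problem for complex Monge--Amp\`ere equations. For the interior bound I apply the maximum principle to
\[
H(x,t)=\log\Delta u+\phi(|\nabla u|^2)+\psi(u),
\]
where $\phi,\psi$ are convex and calibrated so that the good term coming from $\psi''(u)\sum u^{i\bar{j}}u_iu_{\bar{j}}$ dominates the bad cross terms produced when the parabolic linearised operator $\partial_t-w\,u^{i\bar{j}}\partial_i\partial_{\bar{j}}$ is applied to $H$. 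Together with the boundary estimate and the two-sided control of $\det(u_{i\bar{j}})$ from (ii), this gives uniform upper and lower bounds on each eigenvalue of $u_{i\bar{j}}$.

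The main obstacle I anticipate is the interior $C^2$ step: because the parabolic coefficient $w=-u_t$ is nonconstant, the time derivative of $H$ interacts nontrivially with the spatial part, so $\phi$ and $\psi$ must be calibrated using the quantitative bounds from (i) and (ii); in particular the lower bound $w\ge c_1>0$ is essential to keep the linearised operator uniformly parabolic. Once all these bounds are in hand, the concavity of $\log(-s)+\log\det M$ as a function of $(s,M)$ on $\RR_{<0}\times\mathrm{Herm}^+_n$ allows parabolic Evans--Krylov to yield a $C^{2,\alpha}$ estimate up to the boundary (away from the initial corner, where the compatibility condition \eqref{compatibilitycondition} provides the regularity needed to reach $t=0$), from which the stated $C^{2,1}(\overline{Q_T})$ bound is immediate and standard parabolic Schauder theory supplies any further regularity.
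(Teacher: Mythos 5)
Your overall skeleton matches the paper's: a sub/supersolution pair for the $C^0$ bound, differentiation of the equation in $t$ plus a weighted maximum principle for the two-sided bound on $-u_t$ (the paper works with $(-u_t)e^{\mp Ct}$ rather than $e^{-\lambda t}\log(-u_t)$, but the mechanism is the same), then boundary and interior second-order estimates. However, two of your steps have genuine gaps. First, you never establish a gradient estimate up to the boundary. The boundary gradient does follow from squeezing $u$ between $\underline{u}=-b(t)+A\varphi_0$ and $-b(t)$, but a global bound $\sup_{Q_T}|\nabla u|\le C$ (obtained in the paper by applying the linearized operator $\cL=-\partial_t+(-u_t)u^{i\bar j}\partial_i\partial_{\bar j}$ to $\pm\nabla_e u+C|z|^2$) is indispensable later: it is what calibrates $\phi$ in your interior test function $\log\Delta u+\phi(|\nabla u|^2)+\psi(u)$, and it is what controls the tangential derivative quantity on the inner part of the half-ball boundary in any mixed-derivative barrier argument. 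Interior gradient bounds for plurisubharmonic functions degenerate near $\partial B_1$, so this step cannot be absorbed into the $C^0$ estimate.

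Second, your boundary $C^2$ scheme assigns the tools to the wrong derivatives, and as stated the tangential--normal estimate would fail. The single scalar relation $(-u_t)\det(u_{i\bar j})=f$ does not control $u_{\nu e}$; this is precisely where the hard work lies. One must apply $\cL$ to $T(u-\underline{u})$ with $T=\partial_{s_\alpha}+\rho_{s_\alpha}\partial_{s_{2n}}$, absorb the bad third-order terms by subtracting $C(u_{s_{2n-1}}-\underline{u}_{s_{2n-1}})^2$, and compare with a barrier of the form $(u-\underline{u})+a(-b(t)-\underline{u})-Nd^2$; this parabolic adaptation of the Caffarelli--Kohn--Nirenberg--Spruck construction is the technical core of the paper's proof. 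Conversely, it is the pure normal derivative $u_{\nu\nu}$ that comes from the equation, and only after one has a positive lower bound on the tangential complex Hessian block: an upper bound on $\det(u_{i\bar j})=f/(-u_t)$ alone is compatible with $u_{\nu\nu}\to\infty$ if a tangential eigenvalue degenerates. In the present setting, since the boundary data $-b(t)$ is spatially constant, $u=-b(t)+\sigma\varphi_0$ near $\partial B_1$, the tangential Hessian equals $-u_{s_{2n}}$ times the boundary second fundamental form, and Hopf's lemma (using $\Delta u\ge n\det(u_{i\bar j})^{1/n}\ge c>0$) gives $u_{s_{2n}}\le -c<0$; your proposed barrier $\sigma|z-z_0|^2-\tau d(z)$ for $u_{\nu\nu}$ is unsubstantiated and is not how this derivative is handled. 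The remaining ingredients are fine, though heavier than necessary: the paper gets the interior-to-boundary reduction simply by differentiating the equation twice and using concavity of $\log\det$ (no Aubin--Yau-type test function needed on the flat ball), and Evans--Krylov is not required for the stated $C^{2,1}$ bound.
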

\begin{proof}
We follow the ideas of \cite{MR0780073} and \cite{MR1664889}. Let $\underline{u}=-b(t)+A\varphi_0$, where $A$ is a constant chosen such that $b'(t)A^n\geq f$ for all $t\in[0,T].$ Therefore $\underline{u}$ satisfies $(-\partial_t\underline{u})(\ddc\underline{u}_{i\bar{j}})^n\geq f$ on $\overline{Q_T}$, which is a subsolution of equation \eqref{existenceofinvermongeampereflow}.
\begin{enumerate}
\item $ \underline{u}\leq u\leq -b(t)$.

Note that $\underline{u}\leq u$ on the parabolic boundary $\partial_P Q_T$, we can assume that the maximum of $\underline{u}-u$ is attained $x_0\in Q_T$. Then at $x_0$, we have $-\partial_t(\underline{u}-u)\leq 0$ and $\ddc(\underline{u}-u)\leq 0$. Therefore we have \[
	b'(t)A^n=(-\partial_t\underline{u})\det(\underline{u}_{i\bar{j}})\leq (-\partial_tu)\det(u_{i\bar{j}})=f.
\]We get a contradiction to our choice of $A$. Therefore the maximum is attained at the parabolic boundary of $Q_T$, and we get $\underline{u}\leq u$. For the other direction, just apply maximum principle to the $\PSH$-function $u$.

\item $M^{-1}\leq (-\partial_t u)\leq M$ for a positive constant $M>0$ depending on upper bound of $|(\log f)_t|, ,f, b'(t), T$.

let $\cL=-\partial_t+(-\partial_tu)u^{i\bar{j}}\partial_i\partial_{\bar{j}}$ be the linearization operator of the equation. Then taking derivative of the equation with respect to $t$, we get $\cL (-u_t)=(-\log f)_t(-u_t)$. Since $f$ is strictly positive and smooth, we can assume that $|(\log f)_t|\leq C_1$. Then \[
	\cL(-u_te^{-C_1t})=e^{-C_1t}\cL(-u_t)+C_1e^{-C_1t}(-u_t)=(C_1+(\log f)_t)e^{-C_1t}(-u_t)>0.
\]Hence by maximum principle, the maximum of $(-u_t)e^{-C_1t}$ is attained at the parabolic boundary of $Q_T$. On $\partial_1\times[0,T)$, we have $-u_t=b'(t)$ and on $B_1\times\{0\}$ we have $-u_t=\frac{f}{\det{(\varphi_0)_{i\bar{j}}}}$. Therefore $(-u_t)\leq Ce^{-C_1t}$. 

Similarly, consider the function $e^{C_1t}(-u_t)$, we have $\cL(e^{C_1t}(-u_t))<0$ and we get $(-u_t)\geq Ce^{-C_1t}$. 

\item $|\nabla u|\leq C.$ 

Note that for ant $t\in [0,T)$, we have $\underline{u}\leq u\leq -b(t)$ and $\underline{u}=u=-b(t)$ on $\partial B_1\times\{t\}$, we get $0\leq \partial_n u\leq \partial_n\underline{u}$ on $\partial B_1\times\{t\}$, where $\partial_n$ means the outer normal derivative. Therefore $|\nabla u|$ is bounded by a uniform constant $C$ on $\partial B_1\times\{t\}$. On the other hand, $|\nabla u|$ is bounded on the bottom boundary $B_1\times\{0\}$ since $u=\varphi_0$. 

For any unit vector $e$, we have \[
	\cL(\nabla_e u)=(\nabla_e\log f)(-u_t). 
\]
Note that $\sum_{i=1}^n u^{i\bar{i}}\geq n\det(u_{i\bar{j}})^{-\frac{1}{n}}=n(-u_t)^{\frac{1}{n}}f^{-\frac{1}{n}}$. Let $C_0$ be an upper bound of $f$ and $C_2$ be an upper bound of $|\nabla \log f|$. Then \[
	\cL(\pm \nabla_e u+C_4|z|^2)\leq -C_2M+C_4\sum_{i=1}^n u^{i\bar{i}}\geq-C_2M+C_4nM^{-\frac{1}{n}}C_0^{-\frac{1}{n}}>0, 
\] if we choose $C_4>C_2n^{-1}M^{1+\frac{1}{n}}C_0^{\frac{1}{n}}.$ By maximum principle we get \[
	\pm \nabla_e u\leq C_5(1+\sup_{\partial_PQ_T}|\nabla u|)\leq C.
\]

\item $\sup_{Q_T}|\nabla^2u|\leq C(1+\sup_{\partial_P Q_T}|\nabla^2 u|).$

Apply $\nabla_e$ to the equation we get \begin{equation}\label{fistorder}
	(-\partial_t\nabla_eu)\det(u_{i\bar{j}})+(-u_t)\det(u_{i\bar{j}})u^{k\bar{l}}(\nabla_eu)_{k\bar{l}}=\nabla_ef.
\end{equation}
Apply $\nabla_e$ again, we get \begin{equation}\label{secondorder}\begin{aligned}
	&(-\partial_t\nabla_{ee}^2u)\det(u_{i\bar{j}})+(-\partial_t\nabla_eu)\det(u_{i\bar{j}})u^{k\bar{l}}(\nabla_eu)_{k\bar{l}}\\
	&+(-\partial_t\nabla_eu)\det(u_{i\bar{j}})u^{k\bar{l}}(\nabla_eu)_{k\bar{l}}+(-u_t)\det(u_{i\bar{j}})u^{m\bar{n}}(\nabla_eu)_{m\bar{n}}u^{k\bar{l}}(\nabla_eu)_{k\bar{l}}\\
	&-(-u_t)\det(u_{i\bar{j}})u^{k\bar{q}}u^{p\bar{l}}(\nabla_eu)_{p\bar{q}}(\nabla_eu)_{k\bar{l}}+(-u_t)\det(u_{i\bar{j}})u^{k\bar{l}}(\nabla_{ee}^2u)_{k\bar{l}}=\nabla_{ee}^2f
	\end{aligned}
\end{equation}
Plug in the equation \eqref{fistorder} to \eqref{secondorder}, and divided it by $\det(u_{i\bar{j}})$, we have \[\begin{aligned}
	\cL(\nabla_{ee}^2u)-(-u_t)\det(u_{i\bar{j}})u^{m\bar{n}}(\nabla_eu)_{m\bar{n}}u^{k\bar{l}}(\nabla_eu)_{k\bar{l}}&-(-u_t)\det(u_{i\bar{j}})u^{k\bar{q}}u^{p\bar{l}}(\nabla_eu)_{p\bar{q}}(\nabla_eu)_{k\bar{l}}\\
&=\frac{(-u_t)\nabla_{ee}^2f}{f}-2\frac{(-u_t)\nabla_ef}{f}
	\end{aligned}
\]Hence, \[
	\cL(\nabla_{ee}u^2)\geq -C_6, \quad \text{for some constant $C_6$ depending on } \sup f^{-1}, ||f||_{C^2(Q_T)}. 
\]
Let $C_7=C_6n^{-1}M^{\frac{1}{n}}C_0^{\frac{1}{n}}+1$, we have \[
	\cL(\nabla_{ee}^2u+C_7|z|^2)\geq-C_6+C_7\sum_{i=1}^{n}u^{i\bar{i}}\geq 1.
\]
Apply maximum principle to $\nabla^2_{ee}u+C_7|z|^2$ we get the desired estimate. 
\item $\sup_{\partial B_1\times\{t\}}|\nabla_{ee}^2u|\leq C$ for any unit vector tangent to $\partial B_1\times\{t\}$.

For any point $p\in \partial B_1$, we can choose coordinate $(z_1,\ldots,z_n)$ such that $p=(0,\ldots,0)$.  We use $z_i=s_{2i-1}+\sqrt{-1}s_{2i}$ to denote the real coordinates. One can assume that $s_{2n}$ is the outer normal direction of $\partial B_1$ at $p$. Denote $s'=(s_1,\ldots,s_{2n-1})$. Then for a neighborhood of $p$, say $U$, one has \[
	U\cap B_1=\{s_{2n}>\rho(s')=\frac{1}{2}\sum_{\alpha,\beta\in\{1,\ldots,2n-1\}}B_{\alpha\bar{\beta}}s_\alpha s_{\beta}+O(|s'|^3)\}.
\]
Since $u=-b(t)$ on $\partial B_1$, i.e. $u(s',\rho(s'))=-b(t)$, we have \[
	u_{s_\alpha s_{\beta}}(0)+u_{s_{2n}}(0)B_{\alpha\beta}=0.
\]
It follows \[|\nabla_{ee}^2u|\leq C\sup|\nabla u|, \quad \text{ for any tangent vector $e$.}\]

\item $\sup_{\partial B_1\times\{t\}}|\nabla^{2}_{\nu e}u|\leq C$ for any unit tangent vector $e$ and outer normal vector $\nu$. 

To estimate $u_{s_\alpha s_{2n}}$, we consider the operator $T:=\partial_{s_\alpha}+\rho_{s_{\alpha}}\partial_{s_{2n}}$ in a neighborhood of $p\in \partial B_1\times\{t\}$.  Then direct computation shows\[
	\begin{aligned}
	\cL(T(u-\underline{u}))=&-\partial_t\partial_{s_{\alpha}}u-\rho_{s_\alpha}\partial_tu_{s_{2n}}+(-u_t)u^{i\bar{j}}(\partial_{s_\alpha}u)_{i\bar{j}}+(-u_t)u^{i\bar{j}}(\rho_{s_{\alpha}}u_{x_{s_{2n}}i\bar{j}})\\
	&+(-u_t)u^{i\bar{j}}(\rho_{s_\alpha i}u_{s_{2n}\bar{j}}+\rho_{s_\alpha\bar{j}}u_{s_{2n}i}+\rho_{s_\alpha i\bar{j}}u_{s_{2n}})\\
	=& (-u_t)T(\log f)+(-u_t)u^{i\bar{j}}(\rho_{s_\alpha i}u_{s_{2n}\bar{j}}+\rho_{s_\alpha\bar{j}}u_{s_{2n}i}+\rho_{s_\alpha i\bar{j}}u_{s_{2n}})
	\end{aligned}
\]
Note that $\partial_{s_{2n}}=2\sqrt{-1}\partial_{z_n}-\sqrt{-1}\partial_{s_{2n-1}}$, we get \[\begin{aligned}
	(-u_t)u^{i\bar{j}}\rho_{s_\alpha i}(2\sqrt{-1}u_{z_n\bar{j}}-\sqrt{-1}u_{s_{2n-1}\bar{j}})=&2\sqrt{-1}(-u_t)\rho_{s_{\alpha}n}-\sqrt{-1}(-u_t)\rho_{s_\alpha i}u_{s_{2n-1}\bar{j}}\\
	=&O(1+(\sum u^{k\bar{k}})^{\frac{1}{2}}(u_{s_{2n-1}\bar{j}}u_{s_{2n-1}i}u^{i\bar{j}})^{\frac{1}{2}}).
	\end{aligned}
\]
Similar computation for the other two terms $\rho_{s_\alpha\bar{j}}u_{s_{2n}i}$, $\rho_{s_\alpha i\bar{j}}u_{s_{2n}}$ shows that \begin{equation}\label{tnestimatemainterm}
	\cL(T(u-\underline{u}))=(-u_t)T\log f+O(1+(\sum u^{k\bar{k}})^{\frac{1}{2}}(u_{s_{2n-1}\bar{j}}u_{s_{2n-1}i}u^{i\bar{j}})^{\frac{1}{2}}).
\end{equation}
To kill the last term, we consider the function $(u_{s_{2n-1}}-\underline{u}_{s_{2n-1}})^2$ defined in a neighborhood of $p$. \begin{equation}\label{u2nsquare}
\begin{aligned}
\cL(u_{s_{2n-1}}-\underline{u}_{s_{2n-1}})^2=&2(u_{s_{2n-1}}-\underline{u}_{s_{2n-1}})(-\partial_t)(u_{s_{2n-1}}-\underline{u}_{s_{2n-1}})\\
&+2(-u_t)(u_{s_{2n-1}}-\underline{u}_{s_{2n-1}})u^{i\bar{j}}(u_{s_{2n-1}}-
\underline{u}_{s_{2n-1}})_{i\bar{j}}\\
&+2(-u_t)u^{i\bar{j}}(u_{s_{2n-1}}-\underline{u}_{s_{2n-1}})_i(u_{s_{2n-1}}-\underline{u}_{s_{2n-1}})_{\bar{j}}\\
\geq &2(u_{s_{2n-1}}-\underline{u}_{s_{2n-1}})\cL((u_{s_{2n-1}}-\underline{u}_{s_{2n-1}}))\\
&+(-u_t)(u_{s_{2n-1}\bar{j}}u_{s_{2n-1}i}u^{i\bar{j}})-|\nabla^2 \underline{u}|_{u_{i\bar{j}}}
\end{aligned}
\end{equation}
Combine \eqref{tnestimatemainterm} and \eqref{u2nsquare}, there exists a constant $C_8,C_9$ such that \[
	\cL(\pm T(u-\underline{u})-C_8(u_{s_{2n-1}}-\underline{u}_{s_{2n-1}})^2)\leq C_9
\]

Now we define a barrier function $v$ as \[
	v=(u-\underline{u})+a(-b(t)-\underline{u})-Nd^2,
\] where $a,N$ are constant to be determined and $d$ is the distance function $d(z)=dist(z,\partial B_1),$ which is defined in a neighborhood of $p$. 
We compute $\cL v$ as follows\[
	\cL(u-\underline{u})=-u_t+(-u_t)u^{i\bar{j}}u_{i\bar{j}}+\underline{u}_t-(-u_t)u^{i\bar{j}}\underline{u}_{i\bar{j}}=(n+1)(-u_t)+b'(t)-(-u_t)\sum_{i=1}^{n}u^{i\bar{i}}.
\]
\[
	\cL(-b(t)-\underline{u})=b'(t)+\underline{u}_t-(-u_t)u^{i\bar{j}}\underline{u}_{i\bar{j}}=-(-u_t)\sum_{i=1}^{n}u^{i\bar{i}}.
\]
And \[
	\cL d^2=2du^{i\bar{j}}d_{i\bar{j}}+2u^{i\bar{j}}d_id_{\bar{j}}=u^{n\bar{n}}+O(d)\sum_{i}^{n}u^{i\bar{i}}.
\]
Note that  \[
	\frac{1}{4}\sum_{i=1}^nu^{i\bar{i}}+Nu^{n\bar{n}}\geq \frac{n}{4}(N\Pi u^{i\bar{i}})^{-\frac{1}{n}}=\frac{nN^{\frac{1}{n}}(-u_t)^{\frac{1}{n}}}{4f^{\frac{1}{n}}}\geq C_{10}N^{\frac{1}{n}}.
\]
So consider the above functions in the ball $B(p,\delta)$, we have \[
	\cL v\leq n(-u_t)+b'(t)+(-1-a+N\delta+\frac{1}{4})\sum_{i=1}^{n}u^{i\bar{i}}-C_{10}N^{\frac{1}{n}}.
\]
It shows that $\cL v$ can be made arbitrary small if we choose $N$ large and $\delta$ small. Hence \begin{equation}\label{differentialinequalityTN}
	\cL(v+C_{11}|z|^2 \pm T(u-\underline{u})-C_8(u_{s_{2n-1}}-\underline{u}_{s_{2n-1}})^2))<0
\end{equation}

On the one hand, from our construction of $\underline{u}$, we know $(-b(t)-\underline{u})\geq C_{12}d$ for a uniform positive constant $C_{12}$. Therefore $a(-b(t)-\underline{u})-Nd^2\geq 0$ for arbitrary $a,N$ by shrinking $\delta$ if necessary. So $v\geq 0$ on $B(p,\delta)\times [0,t].$
On the other hand, $T(u-\underline{u})=0$ on $(\partial B_1\times[0,t]) \cap (B(p,\delta)\times[0,t])$, and $|T(u-\underline{u})|\leq C_{13}$ on $(B_1\times[0,t])\cap(\partial B(p,\delta)\times[0,t])$ thanks to the gradient estimate of $u$. For the term $(u-\underline{u})_{s_{2n-1}}$, we have \[
	(u-\underline{u})_{s_{2n-1}}+(u-\underline{u})_{s_{2n}}\rho_{s_{2n-1}}=0
\text{ on } \partial B_1\times [0,T].\]
Therefore, \[
	(u_{s_{2n-1}}-\underline{u}_{s_{2n-1}})^2\leq C_{14}|z|^2
\text{ on } \partial B_1\times[0,T].\]
On the other part of boundary $(\partial B(p,\delta)\times[0,t])\cap (B_1\times[0,t])$, we have $|z|^2\geq \delta^2$. In summary we have \begin{equation}\label{boundarypositiveTN}
	v+C_{11}|z|^2 \pm T(u-\underline{u})-C_8(u_{s_{2n-1}}-\underline{u}_{s_{2n-1}})^2)\geq 0 \text{ on } \partial_P((B(p,\delta)\cap B_1)\times[0,t]).
\end{equation}For sufficient large $C_{11}$ and small $\delta$.
With \eqref{differentialinequalityTN}, \eqref{boundarypositiveTN} and maximum principle, we get \[
	v+C_{11}|z|^2 \pm T(u-\underline{u})-C_8(u_{s_{2n-1}}-\underline{u}_{s_{2n-1}})^2)\geq 0
	\] and therefore
	\[
	|u_{s_{2n}s_{2n}}|\leq v_{s_{2n}}(0)+|\underline{u}_{s_\alpha s_{2n}}|\leq C. 
	\]

\item $\sup_{\partial B_1\times\{t\}}|\nabla^{2}_{\nu\nu}u|\leq C$.

From $\det(u_{i\bar{j}})=\frac{f}{(-u_t)}\leq C$, to show an upper bound of $u_{s_{2n}s_{2n}}$, one needs only to a positive lower bound of $u_{s_{\alpha}s_{\beta}}$. Near the boundary $\partial B_1$, we can write the function $u$ as \[
	u=-b(t)+\sigma\varphi_0,
\] for a smooth function $\sigma$ near $\partial B_1$. Hence, \[
	u(p)_{s_\alpha s_\beta}=-u_{s_{2n}}(0)\varphi_{s_\alpha s_\beta}.
\]Now observe that \[
	\Delta u\geq n\det(u_{i\bar{j}})^{\frac{1}{n}}\geq C_{15}>0.
\] By Hopf lemma, there exists a constant $C_{16}>0$ such that $u_{s_{2n}}\leq -C_{16}$, which implies the desired estimate. Here we use the fact $u=$constant on the boundary $\partial B_1$, which simplifies the proof a lot.

\end{enumerate}
\end{proof}

\AtNextBibliography{\small}
\begingroup
\setlength\bibitemsep{2pt}
\printbibliography

\end{document}